\definecolor{darkblue}{rgb}{0.0, 0.0, 0.55}
\definecolor{darkmagenta}{rgb}{0.55, 0.0, 0.55}
\definecolor{darkgreen}{rgb}{0.0, 0.2, 0.13}
\newcommand{\ZZ}{\mathbb{Z}}
\newcommand{\kk}{\Bbbk}
\def\Hilb{\operatorname{Hilb}}
\newtheorem{thm}{Theorem}[section]
\newtheorem{lem}[thm]{Lemma}
\newtheorem{prop}[thm]{Proposition}
\theoremstyle{definition}
\newtheorem{dfn}[thm]{Definition}
\newtheorem{ex}[thm]{Example}
\newtheorem{q}[thm]{Question}
\newtheorem{rem}[thm]{Remark}
\begin{document}

\title{On the $h$-polynomials of cyclotomic standard graded commutative algebras}

\author{Akihiro Higashitani}
\address{Department of Pure and Applied Mathematics, 
Graduate School of Information Science and Technology, 
Osaka University, 
1-5, Yamadaoka, Suita, Osaka 565-0871, Japan}
\email{higashitani@ist.osaka-u.ac.jp}

\author{Kenta Ueyama}
\address{Department of Mathematics,
Faculty of Science,
Shinshu University,
3-1-1 Asahi, Matsumoto, Nagano 390-8621, Japan}
\email{ueyama@shinshu-u.ac.jp}
\keywords{standard graded $\kk$-algebra, $h$-polynomials, Hilbert series, cyclotomic polynomial, Stanley-Reisner rings, $g$-theorem.}

\subjclass[2020]{Primary: 13A02, Secondary:  05E40, 13F55, 13H10}

\begin{abstract}
We call a standard graded commutative $\kk$-algebra \textit{cyclotomic} if its $h$-polynomial has all its roots on the unit circle in the complex plane. 
Complete intersections provide typical examples of cyclotomic algebras, 
since the $h$-polynomial of any standard graded complete intersection is a product of polynomials of the form $1 + t + \cdots + t^{m-1}$. 
We refer to such polynomials as being \textit{of type CI}. 
A natural question is whether there exists a cyclotomic standard graded $\kk$-algebra whose $h$-polynomial is not of type CI.
In this paper, we give a partial answer to this question. 
We show that the $h$-polynomial $h_R(t)$ of a cyclotomic standard graded $\kk$-algebra $R$ is of type CI whenever $h_R(1) \in \{1, 4, 6\}$ or $h_R(1)$ is prime. 
On the other hand, if $n \ge 8$ and $n$ is not prime, then there exists a cyclotomic standard graded $\kk$-algebra $R$ whose $h$-polynomial $h_R(t)$ is not of type CI and satisfies $h_R(1) = n$.
\end{abstract}

\maketitle

\section{Introduction}
Throughout this paper, let $\kk$ denote a field. We say that $R$ is a \emph{standard graded} $\kk$-algebra 
if $R = \bigoplus_{n \ge 0} R_n$ is a commutative graded $\kk$-algebra with $R_0 = \kk$ and finitely generated in degree one.
(A standard graded $\kk$-algebra is also called a \emph{homogeneous} $\kk$-algebra in other literature.)

\subsection{Background}
The Hilbert series of a (not necessarily commutative) graded $\kk$-algebra is one of the most fundamental invariants in the study of graded $\kk$-algebras.
It provides a concise analytic expression encoding the dimensions of the homogeneous components and thus captures the growth and structural features of the $\kk$-algebra as a $\kk$-vector space. 
It is a classical and well-known fact that if a standard graded $\kk$-algebra is Cohen-Macaulay, then the numerator polynomial of its Hilbert series, called the \textit{$h$-polynomial}, has all positive coefficients.
This fact is one of the cornerstones in graded commutative algebra.
Furthermore, several algebraic properties can be effectively characterized in terms of the Hilbert series. 
A prominent theorem is Stanley's characterization of the Gorenstein property via the symmetry of the Hilbert series (\cite{S78}), 
which has inspired subsequent studies establishing necessary conditions for generalized notions of Gorensteinness, 
such as almost Gorensteinness (\cite{H}), nearly Gorensteinness (\cite{M}), and Artin-Schelter Gorensteinness (\cite{JZ}). 

It is also known that when a standard graded $\kk$-algebra is a complete intersection, its Hilbert series takes a particularly simple form.
In fact, for a positive integer $m$, let \[\Psi_m(t)=1+t+\cdots+t^{m-1}.\]
Then the $h$-polynomial of a standard graded complete intersection is of the form of a product of a finite collection of $\Psi_m(t)$'s (see Proposition~\ref{prop:ci}). 
In addition, the Hilbert series also serves as an essential tool in investigating noncommutative graded complete intersections (see \cite{KKZ, KKZ2}).

\subsection{Cyclotomic standard graded $\kk$-algebras and polynomials of type CI}

In recent years, increasing attention has been directed to the finer analytic behavior of the Hilbert series of standard graded $\kk$-algebras, particularly the roots of their $h$-polynomials. 
These roots encode subtle structural information and sometimes admit striking combinatorial interpretations 
(see, e.g., the survey \cite{Br} for combinatorial consequences of the real-rootedness of the numerator polynomials of generating series). 

Motivated by this perspective, we focus on the following notion for standard graded $\kk$-algebras:

\begin{dfn}[cf. \cite{BD, KKZ}]
A standard graded $\kk$-algebra $R$ is called \textit{cyclotomic} if all roots of its $h$-polynomial $h_R(t)$ lie on the unit circle in the complex plane. 
\end{dfn}

Monic polynomials with integer coefficients whose roots all lie on the unit circle are sometimes called \textit{Kronecker polynomials}.
It is known that every such polynomial can be expressed as a product of cyclotomic polynomials.
Note that the $h$-polynomials of cyclotomic $\kk$-algebras are \textit{not necessarily cyclotomic polynomials themselves}, but rather products of cyclotomic polynomials. 
We say that a polynomial is \textit{of type CI} if it can be written as a product of $\Psi_m(t)$'s. 
In particular, the $h$-polynomials of standard graded complete intersections are of type CI. 
However, the converse does not hold in general; see, for example, \cite[Example~3.7]{S78}. 
Borzi and D'Al\`i showed that the converse does hold when $R$ is Koszul~\cite[Theorem~3.9]{BD}. 
Moreover, it was proved that a cyclotomic polynomial $\Phi_m(t)$ coincides with $h_R(t)$ of some standard graded $\kk$-algebra $R$ if and only if $m$ is prime and $R$ is a hypersurface~\cite[Theorem~4.3]{BD}.

Despite these results, in general, one cannot expect the ring-theoretic structure of $R$ to be determined by the cyclotomicity of its $h$-polynomial. 
This naturally leads to the following question:
\begin{q}
Which polynomials can occur as the $h$-polynomials of cyclotomic standard graded $\kk$-algebras, other than those of type CI? 
\end{q}
For example, \cite[Theorem~4.3]{BD} indicates that a single cyclotomic polynomial $\Phi_m(t)$ is rarely equal to the $h$-polynomial of a standard graded $\kk$-algebra.

\subsection{Main Results}

The aim of this paper is to make a contribution to this question.
Our main result is stated as follows.

\begin{thm}[Main Result]\label{thm:main}
{\em (1)} The $h$-polynomial $h_R(t)$ of any cyclotomic standard graded $\kk$-algebra $R$ is of type CI if $h_R(1) \in \{1,4,6\}$ or $h_R(1)$ is prime. 

\noindent
{\em (2)} There exists a cyclotomic standard graded $\kk$-algebra $R$ whose $h$-polynomial $h_R(t)$ is not of type CI and satisfies $h_R(1)=n$ if $n$ is a non-prime number at least $8$.
\end{thm}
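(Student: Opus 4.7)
Since $R$ is cyclotomic, $h_R(t)$ is a Kronecker polynomial, hence factors as $h_R(t)=\prod_{m\geq 2}\Phi_m(t)^{a_m}$ for non-negative integers $a_m$. Evaluating at $t=1$ gives $h_R(1)=\prod_m\Phi_m(1)^{a_m}$, where $\Phi_m(1)=p$ when $m=p^k$ is a prime power and $\Phi_m(1)=1$ otherwise. Consequently the prime factorisation of $n=h_R(1)$ determines exactly which prime-power cyclotomic factors $\Phi_{p^k}$ may appear, while multi-prime factors $\Phi_m$ are unconstrained. On the type CI side, $\Psi_m(t)=\prod_{d\mid m,\,d>1}\Phi_d(t)$, so a polynomial $\prod\Phi_m^{a_m}$ is of type CI precisely when there exists a multiset $M\subseteq\ZZ_{\geq 2}$ with $a_d=|\{k\in M: d\mid k\}|$ for every $d\geq 2$. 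This translates both parts of the theorem into combinatorial questions about the exponent vector $(a_m)_m$ together with realizability constraints on the $h$-polynomial.

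For part (1), the plan is a case analysis on $h_R(1)$. When $h_R(1)=1$, no prime-power factor is allowed, which forces the multiplicity of $R$ to be $1$; a standard argument then identifies $R$ with a polynomial ring and gives $h_R=1$. When $h_R(1)=p$ is prime, the only prime-power factor is $\Phi_{p^k}$ for some $k\geq 1$, and I aim to show $k=1$ and that no multi-prime factor appears, so that $h_R=\Phi_p=\Psi_p$; the single-factor case is exactly \cite[Theorem~4.3]{BD}, and the remaining combinations are to be excluded by combining Macaulay's bound on the Hilbert function with the identity $[t^1]h_R=-\sum_m a_m\,\mu(m)$ (from $[t^1]\Phi_m=-\mu(m)$) and the lower bound $[t^1]h_R\geq 1$ valid for every non-polynomial standard graded $R$. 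For $h_R(1)\in\{4,6\}$ the analysis is again finite: I enumerate the cyclotomic factorisations with value $4$ or $6$ at $t=1$ and, for each non-type-CI candidate, show that no dimension $d$ makes the coefficient sequence of $h_R(t)/(1-t)^d$ an O-sequence, thereby ruling out its realizability.

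For part (2), I would produce explicit cyclotomic non-CI examples via two complementary mechanisms. First, the $g$-theorem produces Stanley-Reisner realizations: any palindromic $p(t)=\sum h_it^i$ with $h_0=1$ whose $g$-vector $(1,h_1-h_0,\ldots,h_{\lfloor d/2\rfloor}-h_{\lfloor d/2\rfloor-1})$ is an M-sequence arises as the $h$-polynomial of the Stanley-Reisner ring of the boundary complex of some simplicial polytope. A direct calculation shows, for example, that $(1+t)^a\Phi_6(t)$ satisfies the $g$-theorem conditions once $a$ is sufficiently large, giving non-type-CI examples with $h_R(1)=2^a$. Second, I would use the tensor-product identity $h_{R_1\otimes_\kk R_2}(t)=h_{R_1}(t)h_{R_2}(t)$: starting from a base non-type-CI example $R_0$ and tensoring with $\kk[x]/(x^m)$, whose $h$-polynomial is $\Psi_m(t)$, yields a new cyclotomic algebra with $h$-polynomial scaled by $\Psi_m$ and value at $1$ scaled by $m$. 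A short combinatorial check shows that multiplication by $\Psi_m$ preserves the non-type-CI property, since the offending multi-prime cyclotomic factor of $h_{R_0}$ persists and cannot be absorbed into any new $\Psi$-factorisation. To cover every composite $n\geq 8$, I would split into cases according to the prime factorisation of $n$, producing base constructions for semiprimes $n=pq$ and prime-power cases (using factors such as $\Phi_6,\Phi_{10},\Phi_{12},\Phi_{15}$ tailored to the primes dividing $n$), then scaling via the tensor-product mechanism.

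The principal difficulty is part (2), and within it the verification that a chosen $p(t)$ is not of type CI. Because type CI decompositions are not unique, this amounts to a combinatorial impossibility statement on $(a_m)_m$: one must rule out every multiset $M$ with $a_d=|\{k\in M:d\mid k\}|$ for all $d$. For a factor like $\Phi_6$ with $a_6=1$, any contributing $k\in M$ must be divisible by $6$ and hence by $3$, forcing $a_3\geq 1$ as well; tracking such induced divisibility constraints across all $d$ is the crux of the argument. A secondary obstacle is handling small composite $n$, where the $g$-theorem route requires comparatively large degree: for $n\in\{8,9,10,12,14,15\}$ one may instead need direct Stanley-Reisner constructions on specific (possibly non-polytopal) simplicial complexes, or non-Cohen-Macaulay realizations obtained by choosing the Krull dimension $d$ so that $h_R(t)/(1-t)^d$ is an O-sequence.
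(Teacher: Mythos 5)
Your overall plan matches the paper's in several respects---reducing type-CI questions to the combinatorics of the exponent vector $(a_m)_m$, using Macaulay's theorem and the $g$-theorem, and scaling a base example by tensoring with $\kk[x]/(x^a)$ (the paper uses exactly $R[x]/(x^a)$ and the identity $h_{R'}(t)=h_R(t)\Psi_a(t)$). But there is a genuine gap in your Part (1) argument, and your Part (2) plan, while sound in outline, omits the technical heart of the paper.

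\textbf{Part (1): you are missing the degree bound.} The paper's key lemma is that for any standard graded $R$ with palindromic $h$-polynomial one has $\deg h_R(t)\leq h_R(1)-1$ (Proposition~3.2, proved by passing to a Stanley-Reisner ring via Gr\"obner degeneration and polarization, and then counting facets and subfacets of the resulting simplicial complex). This bound is what makes the enumeration finite: when $h_R(1)=p$ it forces $\deg h_R\leq p-1$, so the mandatory factor $\Phi_{p^k}$ must have $k=1$ and nothing else can appear; when $h_R(1)\in\{4,6\}$ it caps the degree at $3$ or $5$ and leaves only a handful of candidates. Your substitute---using $[t^1]h_R=-\sum_m a_m\mu(m)$ together with $[t^1]h_R\geq 1$ and Macaulay---does not bound the degree, and it fails to exclude products of the form $\Phi_p\cdot\Phi_6\cdot\Phi_{30}\cdots$ where multi-prime factors come in pairs with opposite $\mu$-signs (e.g.\ $\Phi_p\Phi_6\Phi_{30}$ has $[t^1]=1$). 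There are infinitely many such candidates, so without a degree bound you cannot reduce to a finite check, and the deeper Macaulay computations you would need are exactly the kind the paper identifies as intractable (see their Example~2.4). You need to find and prove the palindromic degree bound, or an equivalent finiteness statement, before your Part (1) case analysis can close.

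\textbf{Part (2): right tools, but the hard part is the construction.} Your two mechanisms---$g$-theorem realization and $\Psi_a$-scaling---are both used in the paper, and the scaling step is identical. However, the paper does \emph{not} obtain its base examples for $n=2q$ and $n=3q$ from the $g$-theorem; it builds them directly as Stanley-Reisner rings of unions $\partial\Delta_{n}\cup(\partial\Delta_{\bullet}*\cdots*\Delta_2)$ using inclusion-exclusion on Hilbert series, together with an explicit polynomial identity (their Lemma~4.3) that identifies the resulting $h$-polynomial with $\Psi_2\Psi_q\Phi_6(t^{(q-1)/2})$ or $\Psi_3\Psi_q\Phi_6(t^{q-1})$. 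The $g$-theorem is invoked only for the case $n=pq$ with $5\leq p\leq q$ odd, and even there the realizable polynomial is a carefully engineered $f_{p,q}(t)=g_{p,q}(t)/(t(1-t)^2\Psi_{q-1}(t))$, whose coefficient differences require a dedicated case analysis to verify the $g$-theorem inequalities (their Proposition~4.7). Your proposed candidate $(1+t)^a\Phi_6(t)$ does not satisfy the $g$-theorem conditions for small $a$ (e.g.\ $a=3$ gives $h=(1,2,1,1,2,1)$ with $h_2-h_1=-1<0$), and you acknowledge this; what is missing is the alternative constructions that actually cover $n\in\{8,9,12\}$ and the semiprime cases. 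You correctly flag that checking non-type-CI is a nontrivial combinatorial exclusion, and your divisibility-tracking idea for $\Phi_6$ is exactly the right kind of argument, but the realizability half (exhibiting a ring with that $h$-polynomial) is where most of the paper's work lies and is only sketched in your proposal.
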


Note that $h_R(1)$ is nothing but the multiplicity of $R$ (see \cite[Proposition 4.1.9]{BH}), which is one of the important invariants of graded modules.

\subsection{Structure of the paper}
We briefly describe the structure of this paper. 
In Section~\ref{sec:pre}, we fix our notation (Hilbert series, $h$-polynomials, simplicial complexes, etc.), 
review fundamental notions (cyclotomic polynomials, Stanley-Reisner theory, and related topics), 
and recall classical results, namely Macaulay's theorem and the $g$-theorem, which will be used in the proof of Theorem~\ref{thm:main}. 
Sections~\ref{sec:(1)} and~\ref{sec:(2)} are devoted to the proofs of parts~(1) and~(2) of Theorem~\ref{thm:main}, respectively.

\section*{Acknowledgments}
The first author was supported by JSPS KAKENHI Grant Numbers JP24K00521 and JP24K00534. 
The second author was supported by JSPS KAKENHI Grant Number JP22K03222.

\bigskip

\section{Preliminaries}\label{sec:pre}

In this section, we prepare several preliminary notions and results used throughout this paper.

\subsection{Hilbert series of standard graded $\kk$-algebras}

Let $R = \bigoplus_{n \ge 0} R_n$ be a standard graded (commutative) $\kk$-algebra.
For $n \ge 0$, define $H(R,n)=\dim_\kk R_n$. We call the function $H(R,-) : \ZZ_{\ge 0} \to \ZZ_{\ge 0}$ the \textit{Hilbert function} of $R$. 
Let $\Hilb(R,t)$ denote the \textit{Hilbert series} of $R$, that is, $\Hilb(R,t) = \sum_{n \ge 0} H(R,n)\, t^n$. 
Then $\Hilb(R,t)$ can be written in the form 
\[\Hilb(R,t) = \frac{h_R(t)}{(1-t)^d},\]
where $d$ is the Krull dimension of $R$ and $h_R(t)$ is a polynomial with integer coefficients satisfying $h_R(1) > 0$ (see, e.g., \cite[Corollary~4.1.8]{BH}). 
We call the polynomial $h_R(t)$ appearing in the numerator the \textit{$h$-polynomial} of $R$. 
It is known that $h_R(1)$ coincides with the multiplicity of $R$ (\cite[Proposition 4.1.9]{BH}).

Note that the Hilbert function of $R$ can be recovered from its $h$-polynomial and Krull dimension. 
Indeed, if $h_R(t) = \sum_{i \ge 0} h_i t^i$ and $d$ is the Krull dimension of $R$, then 
\begin{align}\label{eq:hilb}
H(R,n) = \sum_{i \ge 0} h_i \binom{d-1+n-i}{n-i}.
\end{align}
For more details on basic facts about Hilbert functions and Hilbert series, see \cite[Section~4]{BH}.

\medskip

Regarding the $h$-polynomials of standard graded complete intersections, we have the following result.

\begin{prop}[{cf.~\cite[Corollary~3.4]{S78}}]\label{prop:ci}
Let $R$ be a standard graded $\kk$-algebra. 
If $R$ is a complete intersection, then the $h$-polynomial of $R$ is of type~CI, that is, $h_R(t)=\prod_{i=1}^r\Psi_{m_i}(t)$ for some non-negative integer $r$ and positive integers $m_1, \dots, m_r$.
\end{prop}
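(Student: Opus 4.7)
The plan is to write $R = S/I$ with $S = \kk[x_1,\dots,x_n]$ the standard graded polynomial ring and $I = (f_1,\dots,f_c)$ generated by a homogeneous regular sequence, where $c = n - d$ is the codimension and $d = \dim R$; set $m_i = \deg f_i \ge 1$. The key ingredient is the standard Hilbert series computation for quotienting by a non-zero-divisor: if $A$ is a graded $\kk$-algebra and $g \in A$ is a homogeneous non-zero-divisor of degree $m$, then the short exact sequence
\[
0 \longrightarrow A(-m) \xrightarrow{\;\cdot g\;} A \longrightarrow A/(g) \longrightarrow 0
\]
gives $\Hilb(A/(g), t) = (1-t^m)\,\Hilb(A, t)$.

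Applying this step-by-step to the regular sequence $f_1,\dots,f_c$, starting from $\Hilb(S,t) = 1/(1-t)^n$, I would obtain
\[
\Hilb(R, t) \;=\; \frac{\prod_{i=1}^{c}(1-t^{m_i})}{(1-t)^n}.
\]
Then I would use the factorization $1 - t^{m_i} = (1-t)\,\Psi_{m_i}(t)$ to rewrite this as
\[
\Hilb(R, t) \;=\; \frac{\prod_{i=1}^{c}\Psi_{m_i}(t)}{(1-t)^{n-c}} \;=\; \frac{\prod_{i=1}^{c}\Psi_{m_i}(t)}{(1-t)^{d}}.
\]
Comparing with the definition $\Hilb(R,t) = h_R(t)/(1-t)^d$, this yields $h_R(t) = \prod_{i=1}^{c}\Psi_{m_i}(t)$, which is the desired form with $r = c$ (those $m_i$ equal to $1$ contribute trivial factors $\Psi_1(t) = 1$, which is harmless).

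There is no real obstacle here: the only points to be careful about are (i) that for a standard graded complete intersection we may indeed choose a homogeneous regular sequence of generators whose length equals the codimension, so that $\dim R = n - c$, and (ii) invoking the non-zero-divisor short exact sequence at each successive quotient $S/(f_1,\dots,f_{i-1})$, which is legitimate precisely because $f_1,\dots,f_c$ is a regular sequence. Both are standard facts from \cite{BH}, so the proof amounts to bookkeeping of the Hilbert series identity above.
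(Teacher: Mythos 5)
Your proof is correct and is precisely the standard argument; the paper itself gives no proof but simply cites \cite[Corollary~3.4]{S78}, which is established in exactly this way by iterating the short exact sequence for a homogeneous non-zero-divisor and cancelling the factors $1-t$.
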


\subsection{Cyclotomic polynomials}

Next, we recall some fundamental facts about cyclotomic polynomials. 
For an introduction to cyclotomic polynomials, see, e.g., \cite{L}. 

For a positive integer $m$, let $\Phi_m(t)$ denote the \textit{$m$-th cyclotomic polynomial}, i.e.,
\[\Phi_m(t) = \prod_{\substack{1 \le k \le m \\ \gcd(k,m)=1}}\left(t - e^{2\pi k\sqrt{-1}/m}\right).\]
We collect some well-known properties of cyclotomic polynomials: 
\begin{enumerate}
\item $\Phi_m(t)$ is an irreducible polynomial in $\ZZ[t]$ of degree $\varphi(m)$, where \[\varphi(m) = |\{\, k \in \{1,\ldots,m\} : \gcd(k,m)=1 \}|.\] 
\item $\Phi_m(t)$ is palindromic, i.e., $t^{\varphi(m)} \Phi_m(t^{-1}) = \Phi_m(t)$. 
\item For $m \ge 2$, 
\begin{align}\label{eq:prime}
\Phi_m(1) = 
\begin{cases}
p & \text{if $m$ is a power of a prime $p$,} \\
1 & \text{otherwise.}
\end{cases}
\end{align}
\item For a prime power $p^c$, 
\begin{align}\label{eq:power}
\Phi_{p^c}(t) = \Phi_p(t^{p^{c-1}}).
\end{align}
\item For any $m$, 
\[\Psi_m(t) = \prod_{\substack{1 < d \le m \\ d \mid m}} \Phi_d(t).\]
In particular, $\Psi_m(t) = \Phi_m(t)$ if $m$ is prime. 
\item Any monic polynomial with integer coefficients whose roots all lie on the unit circle in the complex plane can be expressed as a product of cyclotomic polynomials.
\end{enumerate}

\begin{ex}\label{ex:small}
We list the cyclotomic polynomials of small degrees:
\begin{align*}
\deg 1 : \;& \Phi_1(t) = t - 1,  \quad \Phi_2(t) = 1 + t; \\[2mm]
\deg 2 : \;& \Phi_3(t) = 1 + t + t^2, \quad 
\Phi_4(t) = \Phi_2(t^2) = 1 + t^2, \quad 
\Phi_6(t) = 1 - t + t^2; \\[2mm]
\deg 4 : \;& \Phi_5(t) = 1 + t + t^2 + t^3 + t^4, \quad 
\Phi_8(t) = \Phi_2(t^4) = 1 + t^4, \\ 
& \Phi_{10}(t) = 1 - t + t^2 - t^3 + t^4, \quad 
\Phi_{12}(t) = \Phi_6(t^2) = 1 - t^2 + t^4.
\end{align*}
\end{ex}

\subsection{Simplicial complexes and their $h$-polynomials}
Here, we review simplicial complexes and their $h$-polynomials. 

For a given non-empty finite set $V$, let $2^V$ denote the collection of all subsets of $V$ (including the empty set). 
A subset $\Delta \subset 2^V$ is called a \textit{simplicial complex} if it satisfies the following conditions:
\begin{itemize}
\item $\{v\} \in \Delta$ for every $v \in V$;
\item if $F \in \Delta$ and $F' \subset F$, then $F' \in \Delta$. 
\end{itemize}
Note that $ \varnothing \in \Delta$ for every simplicial complex $\Delta$. 

An element $F \in \Delta$ is called an \textit{$i$-face} if $|F| = i + 1$.  In particular, $\varnothing$ is the unique $(-1)$-face.  
We define the \textit{dimension} of $\Delta$ by 
\[\dim \Delta = \max \{\, |F| - 1 : F \in \Delta \,\}.\]
We then set 
\[f_i(\Delta) = |\{\, F \in \Delta : F \text{ is an $i$-face} \,\}| \quad \text{and} \quad f(\Delta) = (f_{-1}(\Delta), f_0(\Delta), \ldots, f_{d-1}(\Delta)),\] 
where $d - 1 = \dim \Delta$. The vector $f(\Delta)$ is called the \textit{$f$-vector} of $\Delta$.  
The \textit{$h$-polynomial} of $\Delta$ is defined by 
\begin{align}\label{eq:fh}
\sum_{i=0}^d h_i(\Delta) t^{d-i}  = \sum_{i=0}^d f_{i-1}(\Delta) (t-1)^{d-i}.
\end{align}
(Note that the $h$-polynomial carries the same information as the $f$-vector via~\eqref{eq:fh}.)

\medskip

For a finite set $V$, let $\Delta_V = 2^V$.  
Clearly, $\Delta_V$ is a simplicial complex of dimension $|V|-1$, which is usually called the \textit{simplex} on $V$. 
We define the \textit{boundary complex} of a simplex by $\partial \Delta_V = \Delta_V \setminus \{V\}$. 
We write $\Delta_d$ (resp. $\partial \Delta_d$) for $\Delta_V$ (resp. $\partial \Delta_V$) when $|V| = d$. 
Then, as an easy exercise, one obtains 
\begin{align}\label{eq:exercise}
h_{\Delta_d}(t) = 1 \quad \text{and} \quad h_{\partial \Delta_d}(t) = \Psi_d(t). 
\end{align}

\subsection{Stanley-Reisner rings of simplicial complexes}

We now provide a brief introduction to the theory of Stanley-Reisner rings (cf.~\cite[Section~5]{BH}). 
This theory plays a crucial role in analyzing the Hilbert series of standard graded $\kk$-algebras. 

For a simplicial complex $\Delta$ on $V = \{1, \ldots, d\}$, define
\[\kk[\Delta] := \kk[x_1, \ldots, x_d] / I_\Delta, \quad \text{where} \quad I_\Delta = \left( \prod_{i \in F} x_i \; : \; F \subset V, \, F \notin \Delta \right).\] 
The standard graded $\kk$-algebra $\kk[\Delta]$ (resp. the ideal $I_\Delta$) is called the \textit{Stanley-Reisner ring} (resp. the \textit{Stanley-Reisner ideal}) of $\Delta$.  
The following facts are well known: 
\begin{itemize}
\item The Krull dimension of $\kk[\Delta]$ is equal to $\dim \Delta + 1$.
\item The $h$-polynomial of $\kk[\Delta]$ coincides with the $h$-polynomial of $\Delta$.
\item The Cohen-Macaulayness and Gorensteinness of $\kk[\Delta]$ can be characterized in terms of the reduced homology of $\Delta$. 
\end{itemize}

From~\eqref{eq:exercise}, we obtain the following equalities, which will be used several times in the sequel:
\[\Hilb(\kk[\Delta_d], t) = \frac{1}{(1-t)^d} \quad \text{and} \quad \Hilb(\kk[\partial \Delta_d], t) = \frac{\Psi_d(t)}{(1-t)^{d-1}}.\]

\subsection{Macaulay's theorem and $g$-theorem}

Finally, let us review Macaulay's theorem and the $g$-theorem. To state them, we first recall the notion of a binomial sum expansion. 

Given a positive integer $a$, there exists a unique expression
\begin{align}\label{eq:binom}
a = \binom{k(d)}{d} + \binom{k(d-1)}{d-1} + \cdots + \binom{k(d')}{d'},
\end{align}
for any fixed positive integer $d$, where $d'$ is some positive integer satisfying $d \ge d'$ and $k(d) > k(d-1) > \cdots > k(d') \ge d'$ (see \cite[Lemma~4.2.6]{BH}). 
We call the expression~\eqref{eq:binom} the \textit{binomial sum expression} of $a$ with respect to $d$. We then define
\[a^{\langle d \rangle} = \binom{k(d)+1}{d+1} + \binom{k(d-1)+1}{d} + \cdots + \binom{k(d')+1}{d'+1}.\]

\begin{thm}[{Macaulay's theorem, cf.~\cite[Theorem~4.2.10]{BH} and \cite[Chapter~II, Theorem~2.2]{Sbook}}]
Let $h : \ZZ_{\ge 0} \to \ZZ_{\ge 0}$ be a function with $h(0) = 1$. 
Then there exists a standard graded $\kk$-algebra $R$ satisfying $H(R,n) = h(n)$ for all $n \ge 0$ if and only if $h(n+1) \le h(n)^{\langle n \rangle}$ for every $n > 0$.
\end{thm}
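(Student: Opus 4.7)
The plan is to prove both directions by reducing to the combinatorics of lex-segment monomial ideals in a polynomial ring $S = \kk[x_1, \ldots, x_{h(1)}]$. The central tool is Macaulay's \emph{compression principle}: for every homogeneous ideal $I \subseteq S$ there exists a lex-segment ideal $I^{\mathrm{lex}}$ (a monomial ideal whose degree-$n$ component, for each $n$, is the span of an initial segment of monomials of degree $n$ in the lex order) satisfying $H(S/I, n) = H(S/I^{\mathrm{lex}}, n)$ for all $n \ge 0$. Once this reduction is available, both directions become a purely combinatorial analysis of monomial shadows in the lex order.

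For the necessity direction, write $R = S/I$ with $S = \kk[x_1, \ldots, x_{h(1)}]$ and $I \subseteq S$ a homogeneous ideal with $I \cap S_1 = 0$. By the compression principle we may assume $I$ is a lex-segment ideal, so $R_n$ has a monomial basis consisting of the $h(n)$ lex-smallest monomials of degree $n$. Expanding $h(n) = \binom{k(n)}{n} + \binom{k(n-1)}{n-1} + \cdots$ in Macaulay form, this basis can be described explicitly, and its image under multiplication by $S_1$ is itself a lex-segment of size exactly $h(n)^{\langle n \rangle}$. Since $R$ is standard graded, the multiplication map $R_1 \otimes R_n \to R_{n+1}$ is surjective, hence $h(n+1) \le h(n)^{\langle n \rangle}$.

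For the sufficiency direction, given $h$ satisfying Macaulay's bound, set $S = \kk[x_1, \ldots, x_{h(1)}]$ and define a graded $\kk$-vector subspace $I = \bigoplus_{n \ge 0} I_n$ of $S$ by taking $I_n$ to be the $\kk$-span of the lex-last $\dim_\kk S_n - h(n)$ monomials of degree $n$ in $S_n$. The inequality $h(n+1) \le h(n)^{\langle n \rangle}$ translates, via the same shadow computation as above, precisely into the containment $S_1 \cdot I_n \subseteq I_{n+1}$, so $I$ is in fact a homogeneous ideal of $S$. The standard graded $\kk$-algebra $R := S/I$ then satisfies $H(R, n) = h(n)$ by construction.

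The main obstacle is the compression principle itself: one must show that for an arbitrary homogeneous ideal $I \subseteq S$ the replacement $I \mapsto I^{\mathrm{lex}}$ produces a genuine ideal without altering the Hilbert function. This is typically handled by a finite sequence of ``shifting'' moves that successively replace each graded piece $I_n$ by its lex-segment of the same dimension while preserving the ideal property, or alternatively by passing to the generic initial ideal in the lex order. The verification is delicate, since it requires a careful compatibility between the lex order on monomials and the multiplication map $S_1 \otimes S_n \to S_{n+1}$; all of the combinatorial content of the theorem is effectively packaged into this step.
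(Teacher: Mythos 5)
The paper does not prove this statement: Macaulay's theorem is recalled as a classical result with citations to \cite{BH} and \cite{Sbook} and then used as a black box, so there is no in-paper proof to compare against. Your sketch correctly outlines the standard argument (reduce to lex-segment monomial ideals via Macaulay's compression principle, then both directions become combinatorics of lex shadows), and you rightly flag the compression principle as carrying the real weight. Two cautions, though. First, passing to a \emph{generic initial ideal} in lex order does not in general yield a lex-segment ideal---the lex gin is Borel-fixed but need not be lex-segment---so the ``alternatively by gin'' route as you state it would require an additional step comparing Borel-fixed ideals with the lex-segment ideal; the direct compression argument of \cite[Section~4.2]{BH} is what actually closes the gap. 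Second, the claim that the shadow under $S_1$-multiplication of the lex-tail of size $h(n)$ has cardinality exactly $h(n)^{\langle n\rangle}$ is itself the combinatorial heart of the theorem (a Macaulay/Kruskal--Katona-type counting lemma) and is asserted here without proof. Since the two lemmas you leave unproved---compression and the shadow count---are essentially the entire mathematical content, what you have written is an accurate road map rather than a proof; that is appropriate for a result the paper merely cites, but it should be presented as such.
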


\begin{ex}\label{ex:111}
We illustrate how to use Macaulay's theorem. 
Let $h(t) = \sum_{i=0}^s h_i t^i$ be a polynomial such that $h_0 = h_1 = \cdots = h_{a-1} = 1$ and $h_a > 1$ for some $1 < a \le s$. 
Then $h(t)$ cannot be the $h$-polynomial of any standard graded $\kk$-algebra.  
Indeed, suppose there exists such an algebra $R$, and let $d$ denote its Krull dimension. Then 
\begin{align*}
H(R,a-1)
&= \sum_{i \ge 0} h_i \binom{d-1+a-1-i}{a-1-i}  = \binom{d+a-2}{a-1} + \binom{d+a-3}{a-2} + \cdots + \binom{d-1}{0} \\
&= \binom{d+a-1}{a-1}, \\
\text{while} \quad 
H(R,a)&= \sum_{i \ge 0} h_i \binom{d-1+a-i}{a-i}  = \binom{d+a-1}{a} + \cdots + \binom{d}{1} + h_a \binom{d-1}{0} \\
&= \binom{d+a}{a} + h_a - 1. 
\end{align*}
Since $H(R,a-1)^{\langle a-1 \rangle} = \binom{d+a}{a}$, we obtain
\[H(R,a-1)^{\langle a-1 \rangle} - H(R,a) = 1- h_a < 0,\]
a contradiction to Macaulay's theorem.
\end{ex}

\begin{ex}\label{ex:modified}
We next describe another polynomial that cannot occur as the $h$-polynomial of any standard graded $\kk$-algebra. 
Consider $h(t) = 1 + 2t + t^2 + 3t^5$, and suppose that there exists a standard graded $\kk$-algebra $R$ of Krull dimension $d$ with this $h$-polynomial. 
Then, from~\eqref{eq:hilb}, we have
\begin{align*}
H(R,4) &= \binom{d+3}{4} + 2\binom{d+2}{3} + \binom{d+1}{2} \quad\text{and} \\
H(R,5) &= \binom{d+4}{5} + 2\binom{d+3}{4} + \binom{d+2}{3} + 3.
\end{align*}
Without loss of generality, we may assume that $d \geq 2$.
By a direct computation, the binomial sum expansion of $H(R,4)$ is
\begin{align*}
\binom{d+3}{4} + 2\binom{d+2}{3} + \binom{d+1}{2} = \binom{d+4}{4} + \binom{d+1}{3} + \binom{d-1}{2} + \binom{d-2}{1}. 
\end{align*}
Hence, 
\[H(R,4)^{\langle 4 \rangle}  = \binom{d+5}{5} + \binom{d+2}{4} + \binom{d}{3} + \binom{d-1}{2}.\]
In this case, one can show that
\[H(R,4)^{\langle 4 \rangle} - H(R,5) = -1,\]
which contradicts Macaulay's theorem. However, establishing this fact is far from trivial. 

As illustrated by this example, the computation of binomial expansions is generally rather involved, 
and hence it is not easy to apply Macaulay's theorem to determine whether a given polynomial can be realized as an $h$-polynomial.
\end{ex}

\medskip

For a simplicial polytope $P$ with vertex set $V$, the collection
\[\{ F \subset V : \mathrm{conv}(F) \text{ is a proper face of } P \}\]
forms a simplicial complex on $V$, called the \textit{boundary complex} of $P$. 
Note that the boundary complex of a simplex of dimension $d-1$ is precisely $\partial \Delta_d$. 

A characterization of the $h$-polynomials of boundary complexes of simplicial polytopes is known as the \textit{$g$-theorem}, 
one of the most celebrated results in Stanley-Reisner theory.

\begin{thm}[{$g$-theorem, cf.~\cite[Chapter~III, Theorem~1.1]{Sbook}}]\label{thm:g}
Let $(h_0, h_1, \ldots, h_d) \in \ZZ^{d+1}$. 
Then there exists a simplicial polytope $P$ of dimension $d$ whose boundary complex has the $h$-polynomial $\sum_{i=0}^d h_i t^i$ 
if and only if the following two conditions are satisfied:
\begin{enumerate}
\item $h_i = h_{d-i}$ for $0 \le i \le \lfloor d/2 \rfloor$, and
\item $h_0 = 1$ and $0 \le h_{i+1} - h_i \le (h_i - h_{i-1})^{\langle i \rangle}$ for $1 \le i \le \lfloor d/2 \rfloor - 1$.
\end{enumerate}
\end{thm}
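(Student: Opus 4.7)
The plan is to prove the two implications separately, following the classical approach of Stanley for necessity and of Billera--Lee for sufficiency; both directions have been in the literature for more than forty years, so I would essentially outline and assemble the ingredients rather than rediscover them.

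For necessity, first I would pass from the simplicial polytope $P$ to the Stanley--Reisner ring $R = \kk[\partial P]$ of its boundary complex, which is Cohen--Macaulay of Krull dimension $d$ and, since $\partial P$ is a $(d-1)$-sphere, actually Gorenstein. Modding out by a linear system of parameters $\theta_1,\dots,\theta_d$ yields an Artinian standard graded $\kk$-algebra $A$ whose Hilbert function is exactly $(h_0,h_1,\ldots,h_d)$. The Gorenstein property forces $A$ to have Poincar\'e duality, giving the symmetry $h_i = h_{d-i}$ of condition~(1). For condition~(2), the decisive input is the hard Lefschetz theorem for the (rational) projective toric variety $X_P$ associated to $P$: there exists $\omega \in A_1$ such that multiplication by $\omega^{d-2i}$ is an isomorphism $A_i \to A_{d-i}$ for $i \le \lfloor d/2\rfloor$. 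In particular multiplication by $\omega$ is injective up to the middle degree, so the quotient $A/\omega A$ is a standard graded $\kk$-algebra whose Hilbert function in degree $i \le \lfloor d/2\rfloor$ equals $g_i := h_i - h_{i-1}$ (with $g_0=1$). This forces $g_i \ge 0$, and Macaulay's theorem applied to $A/\omega A$ forces $g_{i+1} \le g_i^{\langle i\rangle}$, which is exactly condition~(2).

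For sufficiency, given $(h_0,\ldots,h_d)$ satisfying (1) and (2), the truncated sequence $(g_0,\ldots,g_{\lfloor d/2\rfloor})$ is an $M$-sequence in the sense of Macaulay, so by Macaulay's theorem it is the Hilbert function of some standard graded $\kk$-algebra. I would then invoke the Billera--Lee construction, which realizes any such $g$-sequence as the $g$-vector of the boundary complex of an explicit simplicial $d$-polytope built by a controlled sequence of bistellar operations (or, equivalently, by a shelling construction) starting from a cyclic polytope $C(d, n)$ of appropriate parameters. The output is a convex simplicial polytope whose $h$-polynomial is $\sum h_i t^i$, which completes the proof.

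The main obstacle, by a wide margin, is the necessity direction: it relies on hard Lefschetz for the intersection cohomology of $X_P$, which in turn uses either the decomposition theorem and equivariant resolution (Stanley's original treatment for rational $P$) or, for irrational simplicial $P$, McMullen's polytope algebra. A secondary but nontrivial issue is the identification of $A = R/(\theta_1,\ldots,\theta_d)$ with (the even part of) $H^{*}(X_P)$, which requires the standard translation between Stanley--Reisner theory and toric geometry. By contrast, sufficiency is essentially combinatorial once the Billera--Lee polytopes are in hand, and the Macaulay input on both sides is already recorded in the preceding subsection.
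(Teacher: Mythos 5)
The paper does not prove this statement; it is quoted as a classical result with a citation to Stanley's book, so there is no internal proof to compare against. Your outline correctly assembles the standard literature argument that the citation points to --- Stanley's necessity proof via the Stanley--Reisner ring, a linear system of parameters, and hard Lefschetz for the associated toric variety (after a rational perturbation of $P$, which preserves the combinatorial type, ordinary cohomology of the orbifold toric variety suffices; intersection cohomology and McMullen's polytope algebra are not needed in the simplicial case), together with the Billera--Lee construction for sufficiency --- and is consistent with what the paper invokes.
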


\bigskip

\section{Proof of Theorem~\ref{thm:main} (1)}\label{sec:(1)}
This section is devoted to proving Theorem~\ref{thm:main} (1). 

\begin{lem}\label{lem:D-2}
Let $\Delta$ be a simplicial complex of dimension $D-1$ and let $f_{D-1}(\Delta)=a$. Then 
\[f_{D-2}(\Delta) \geq \begin{cases}
a D - \binom{a}{2} &\text{if }a \leq D+1, \\
\binom{D+1}{2}&\text{if } a > D+1. 
\end{cases}\]
\end{lem}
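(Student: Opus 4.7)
I would prove this by a simple greedy / inclusion-exclusion argument exploiting the fact that two distinct $(D-1)$-dimensional facets can share at most one $(D-2)$-face.

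Enumerate the $(D-1)$-dimensional faces of $\Delta$ as $F_1,\ldots,F_a$ in an arbitrary order; each has cardinality $D$. For each $i$ let $A_i$ denote the collection of all $(D-1)$-element subsets of $F_i$, so that $|A_i|=D$ and every element of $A_i$ is a $(D-2)$-face of $\Delta$. The combinatorial input I need is: for $i\neq j$ one has $|A_i\cap A_j|\leq 1$. Indeed, if $F_i$ and $F_j$ shared two different subsets of size $D-1$, then the union of those two subsets already has size $D$ and lies in both $F_i$ and $F_j$, forcing $F_i=F_j$, a contradiction.

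Adding the sets $A_1,A_2,\ldots,A_a$ one at a time to a running union, the $i$-th facet therefore contributes at least $\max(D-(i-1),0)$ new $(D-2)$-faces (its $D$ subsets, each capable of coinciding with at most one subset of each previously processed facet). Hence
\[
f_{D-2}(\Delta)\;\geq\;\Bigl|\bigcup_{i=1}^{a}A_i\Bigr|\;\geq\;\sum_{i=1}^{\min(a,\,D+1)}(D-i+1).
\]
If $a\leq D+1$ the right-hand sum equals $aD-\binom{a}{2}$, and if $a>D+1$ it equals $\sum_{j=0}^{D}j=\binom{D+1}{2}$, yielding both cases of the lemma simultaneously.

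I do not anticipate any real obstacle; the only delicate point is the cardinality observation $|A_i\cap A_j|\leq 1$, which is purely set-theoretic. As a sanity check, equality in the first case is realized by any $a$ facets of the boundary complex $\partial\Delta_{D+1}$, confirming that the bound is sharp.
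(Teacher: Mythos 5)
Your proof is correct and follows essentially the same route as the paper's: both add the facets one at a time and rest on the single combinatorial observation that two distinct $D$-element facets share at most one $(D-1)$-element subset, which the paper phrases as $f_{D-2}(\Delta' \cap F_a) \le a-1$ inside an induction on $a$. Your greedy running-union formulation is just the unrolled version of that induction, and the case split $a \le D+1$ versus $a > D+1$ emerges in the same way.
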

\begin{proof}
If $a=1$, since there should be at least $D$ faces of dimension $(D-2)$, we get $f_{D-2}(\Delta) \geq D$. 

Let $a \leq D+1$ and let $F_1,\ldots,F_a$ be the $(D-1)$-faces of $\Delta$. 
Let \[\Delta'=\langle F_1,\ldots,F_{a-1} \rangle = \{G \in \Delta : G \subset F_i \text{ for some }i\}.\] 
Since $f_{D-2}(\Delta' \cap F_a) \leq a -1$, we see by induction on $a$ that 
\begin{align*}
f_{D-2}(\Delta) = f_{D-2}(\Delta') + D - f_{D-2}(\Delta' \cap F_a) \geq (a-1)D -\binom{a-1}{2} + D - (a-1) =a D - \binom{a}{2}. 
\end{align*}

If $a > D+1$, let $\Delta''$ be a subcomplex of $\Delta$ consisting of some $(D-1)$-faces $F_1,\ldots,F_{D+1} \in \Delta$. 
Then $f_{D-2}(\Delta'') \geq (D+1)D - \binom {D+1}{2}=\binom{D+1}{2}$ by the first case. Thus, 
\[f_{D-2}(\Delta) \geq f_{D-2}(\Delta'') \geq \binom{D+1}{2},\]
as required. 
\end{proof}

The following proposition plays the essential role for the proof of Theorem~\ref{thm:main} (1). 
\begin{prop}\label{prop}
Let $R$ be a standard graded $\kk$-algebra whose $h$-polynomial is palindromic. 
Then we have $\deg(h_R(t)) \leq h_R(1) -1$. 
\end{prop}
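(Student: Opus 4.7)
The plan is as follows. Let $s := \deg h_R(t)$ and write $h_R(t) = \sum_{i=0}^s h_i t^i$ with $h_s \neq 0$, so that palindromicity yields $h_0 = h_s = 1$ and $h_i = h_{s-i}$; set $a := h_R(1) = \sum_{i=0}^s h_i$. I aim to realize $h_R(t)$ as the $h$-polynomial of a pure simplicial complex $\Delta$ of dimension $s - 1$, and then apply Lemma~\ref{lem:D-2} to $\Delta$ with $D = s$.

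Assuming such a $\Delta$ is in hand, the classical $f$-$h$ relation~\eqref{eq:fh} gives $f_{s-1}(\Delta) = a$ and $f_{s-2}(\Delta) = \sum_{i=0}^{s-1}(s-i)\,h_i$. The crucial observation is that palindromicity forces $f_{s-2}(\Delta) = sa/2$: substituting $j = s - i$ and using $h_{s-j} = h_j$ yields $\sum_i (s-i)\,h_i = \sum_j j\,h_j$, and combined with $\sum_i [i + (s-i)]\,h_i = sa$ we obtain $\sum_i i\,h_i = sa/2$, whence $f_{s-2}(\Delta) = sa/2$.

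When $a \leq s + 1$, Lemma~\ref{lem:D-2} gives $f_{s-2}(\Delta) \geq as - \binom{a}{2}$; combined with $f_{s-2}(\Delta) = sa/2$, this reads $sa/2 \geq as - a(a-1)/2$, which rearranges cleanly to $s \leq a - 1$. The remaining case $a \geq s + 2$ already yields $s \leq a - 1$ trivially. In either situation $\deg h_R(t) = s \leq h_R(1) - 1$, as desired.

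The step I expect to be the main obstacle is constructing $\Delta$. When $R$ is Cohen--Macaulay, Macaulay's theorem applied to an Artinian reduction of $R$ shows that $(h_0, h_1, \ldots, h_s)$ is an $M$-sequence, and then Stanley's characterization of $h$-vectors of Cohen--Macaulay simplicial complexes produces the required pure $\Delta$ of dimension $s-1$. For general (possibly non-Cohen--Macaulay) $R$, such a realization is not automatic, and one would need either to first find a Cohen--Macaulay standard graded $\kk$-algebra with the same $h$-polynomial as $R$, or to establish $s \leq h_R(1) - 1$ by a direct Macaulay-style argument applied to $H(R,-)$ (in the spirit of Example~\ref{ex:111}), bypassing the simplicial complex entirely.
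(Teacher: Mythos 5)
Your proposal correctly identifies the core mechanism — compute $f_{D-1}(\Delta)$ and $f_{D-2}(\Delta)$ from the palindromic $h$-polynomial, observe that palindromicity forces $f_{D-2}(\Delta)=\tfrac{s}{2}f_{D-1}(\Delta)$ plus a $(D-s)$ correction, and pit this against the lower bound from Lemma~\ref{lem:D-2}. But the way you set it up contains a genuine gap, and you are frank about it: you insist on realizing $h_R(t)$ as the $h$-polynomial of a complex of dimension \emph{exactly} $s-1$ (so $D=s$), and you can only produce such a complex when $R$ is Cohen--Macaulay (via an Artinian reduction, Macaulay's theorem, and Stanley's realization of $M$-vectors by Cohen--Macaulay complexes). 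For general $R$ you have no construction, and neither of the two escape routes you sketch is carried out.

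The paper closes exactly this gap, and the fix is simpler than either of your proposed workarounds: one does not need $D=s$. Starting from $R\cong S/I$, pass to the initial ideal (preserving the Hilbert series) and then polarize (preserving the $h$-polynomial); the result is a Stanley--Reisner ring $\kk[\Delta]$ with $h_\Delta(t)=h_R(t)$. This always works, with no Cohen--Macaulay hypothesis — the price is that $\dim\Delta=D-1$ satisfies only $D\ge s$, not $D=s$. The $f$--$h$ computation then gives $f_{D-1}(\Delta)=a$ and $f_{D-2}(\Delta)=\sum_{i=0}^s(D-i)h_i=\bigl(D-\tfrac{s}{2}\bigr)a$, and Lemma~\ref{lem:D-2} is applied in two cases: if $a\le D+1$, the inequality $\bigl(D-\tfrac{s}{2}\bigr)a\ge aD-\binom{a}{2}$ rearranges directly to $s\le a-1$ (the $D$'s cancel); if $a>D+1$, then already $a>D+1\ge s+1$, so $s\le a-1$ holds outright. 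So the conceptual content you are missing is precisely the initial-ideal-plus-polarization reduction, which uniformly supplies the needed simplicial complex, together with the observation that the resulting cancellation of $D$ makes the exact value of $D$ irrelevant. Without it, your argument proves the proposition only for Cohen--Macaulay $R$.
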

\begin{proof}
We employ a similar technique used in the proof of \cite[Lemma 4.6]{BD}. 

Let $h_R(t)=\sum_{i=0}^s h_it^i$, where $s$ is the degree of $h_R(t)$. Since $h_R(t)$ is palindromic, i.e., $h_i=h_{s-i}$ for $i=0,1,\ldots,s$. We set $a=h_R(1)$. 

Let $R \cong S/I$, where $S$ is a polynomial ring and $I$ is an ideal of $S$. 
Since the Hilbert series does not change when passing to the initial ideal of $I$, we can assume without loss of generality that $I$ is a monomial ideal. 
Moreover, since polarization preserves $h_R(t)$, we can further assume that $I$ is a squarefree monomial ideal. 
Hence, $I$ is the Stanley-Reisner ideal of some simplicial complex $\Delta$.
Let $\dim \Delta =D-1$. Then $s \leq D$ and $h_\Delta(t)=h_R(t)$, that is, $h_i(\Delta)=h_i$ for every $0 \leq i \leq s$.
By \eqref{eq:fh}, we see that 
\begin{align*}
f_{D-1}(\Delta)&=h_R(1)=a, \text{ and}\\
f_{D-2}(\Delta)&=\sum_{i=0}^s(D-i)h_i(\Delta)=(D-s)\sum_{i=0}^sh_i(\Delta)+\frac{s}{2}\sum_{i=0}^sh_i(\Delta)=\left(D-\frac{s}{2}\right)a. 
\end{align*}

Suppose that $a \leq D+1$.  Then, by Lemma~\ref{lem:D-2}, we have 
\begin{align*}
f_{D-2}(\Delta)=\left(D-\frac{s}{2}\right)a \geq a D - \binom{a}{2}
\end{align*}
and hence $s \leq a-1$. This is the desired conclusion. 

Suppose that $a > D+1$. Then, by Lemma~\ref{lem:D-2}, we have
\begin{align*}
f_{D-2}(\Delta) =\left(D-\frac{s}{2}\right)a \geq \binom{D+1}{2}.
\end{align*}
This implies $2Da - D^2 - D \geq sa$, and hence, $a(a-s) \geq (D-a)^2 + D > 0$.
Therefore, $a > s$, i.e., $s \leq a-1$. 
\end{proof}

By using Proposition~\ref{prop}, we can show the following two propositions, which give a proof of Theorem~\ref{thm:main} (1). 
\begin{prop}\label{prop:1prime}
Let $R$ be cyclotomic and assume that $h_R(1)$ is $1$ or a prime $p$. Then $R$ is regular or 
a hypersurface defined by a homogeneous polynomial of degree $p$, respectively. In particular, $h_R(t)$ is of type CI. 
\end{prop}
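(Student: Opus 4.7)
The plan is to combine the structure theorem for Kronecker polynomials (property~(6)) with the palindromic degree bound of Proposition~\ref{prop} to pin down $h_R(t)$ exactly, and then to read off the ring structure from the resulting Hilbert series.

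First I would verify that $h_R(t)$ is a monic integer polynomial expressible as $\prod_i \Phi_{m_i}(t)^{e_i}$ with every $m_i \ge 2$. The leading coefficient must have absolute value $1$ because all roots have modulus $1$ and $h_R(0) = 1$; positivity of $h_R(1)$ together with the fact that $1$ itself cannot be a root (else $h_R(1) = 0$) fixes the sign to $+1$. By property~(6) the polynomial factors into cyclotomic factors, and $\Phi_1(t) = t - 1$ is excluded, for otherwise $h_R(1) = 0$. Since each $\Phi_m(t)$ with $m \ge 2$ is palindromic by property~(2), so is $h_R(t)$, and Proposition~\ref{prop} yields $\deg h_R(t) \le h_R(1) - 1$.

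When $h_R(1) = 1$, this bound forces $h_R(t) = 1$, so $\Hilb(R, t) = 1/(1-t)^d$; a choice of basis of $R_1$ produces a surjection $\kk[x_1, \ldots, x_d] \twoheadrightarrow R$ between algebras with matching Hilbert functions, hence an isomorphism, and $R$ is regular. When $h_R(1) = p$ is prime, property~(3) and the multiplicativity of the factorization force exactly one $\Phi_{p^c}(t)$ to appear with multiplicity one, while every other $m_j$ is a non-prime-power. The identity $\deg \Phi_{p^c}(t) = \varphi(p^c) = p^{c-1}(p-1)$ and the bound $\deg h_R(t) \le p - 1$ then collapse $c$ to $1$ and exhaust the degree budget, excluding any additional factor; hence $h_R(t) = \Phi_p(t) = \Psi_p(t)$. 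From $\Hilb(R, t) = (1 - t^p)/(1-t)^{d+1}$ one reads off $\dim_{\kk} R_1 = d+1$ and, writing $R = S/I$ with $S = \kk[x_1, \ldots, x_{d+1}]$, finds $\dim_{\kk} I_k = 0$ for $k < p$ and $\dim_{\kk} I_p = 1$; any nonzero $f \in I_p$ is a nonzerodivisor in $S$, so $S/(f)$ has the same Hilbert series as $R$, forcing $I = (f)$ and confirming that $R$ is a degree-$p$ hypersurface.

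The main pinch-point is the prime case, where the restriction $p^{c-1}(p-1) \le p - 1$ coming from Proposition~\ref{prop} must simultaneously collapse $\Phi_{p^c}$ to $\Phi_p$ and rule out any non-prime-power cyclotomic companion; this is exactly the place where the palindromic degree bound is indispensable, since the cyclotomic factorization of $h_R(t)$ alone does not force uniqueness.
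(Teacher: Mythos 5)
Your argument follows the same skeleton as the paper's proof: observe that $h_R(t)$ is a product of cyclotomic polynomials $\Phi_m(t)$ with $m \ge 2$ (the factor $\Phi_1(t) = t-1$ being excluded since $h_R(1) > 0$), hence palindromic, apply Proposition~\ref{prop} to get $\deg h_R(t) \le h_R(1) - 1$, and then use $\Phi_m(1) = p$ precisely when $m$ is a power of $p$ to pin down $h_R(t)$ as $1$ or $\Phi_p(t)$.

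The one place you diverge is the final step of the prime case: the paper invokes \cite[Theorem~4.3]{BD} to conclude that $R$ with $h_R(t) = \Phi_p(t)$ is a hypersurface, whereas you argue it directly from the Hilbert series, reading off $\dim_\kk R_1 = d+1$, writing $R = S/I$ with $S = \kk[x_1,\dots,x_{d+1}]$, computing $\Hilb(I,t) = t^p/(1-t)^{d+1}$ so that $I$ has no elements below degree $p$ and a one-dimensional degree-$p$ piece, and then comparing Hilbert series of $S/(f) \twoheadrightarrow S/I$ for $0 \neq f \in I_p$ to force $I = (f)$. This is a correct, self-contained replacement for the citation and costs nothing; everything else in your write-up (the monicity discussion, the collapse of $p^{c-1}(p-1) \le p-1$ to $c=1$, the exhaustion of the degree budget to exclude non-prime-power companion factors) is sound and matches the paper's reasoning, merely spelled out in more detail.
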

\begin{proof}
In the case of $h_R(1)=1$, Proposition~\ref{prop} directly implies $h_R(t)=1$, i.e., $R$ is regular.  

If $h_R(1)=p$ is prime, then $h_R(t)$ is divisible by $\Phi_p(t^i)$ for some $i \geq 1$. 
By Proposition~\ref{prop}, we see that $\deg (h_R(t)) \leq h_R(1)-1$. Hence, $i=1$ and $h_R(t)$ must be $\Phi_p(t)$ itself. 
Then \cite[Theorem 4.3]{BD} implies that $R$ is a hypersurface. 
\end{proof}

\begin{prop}
Let $f(t)$ be a monic polynomial with integer coefficients
whose roots all lie on the unit circle. Assume that $\deg (f(t)) \leq f(1)-1$. \\
{\em (1)} If $f(1)=4$, then $f(t)$ is of type CI. \\
{\em (2)} If $f(1)=6$ and $f(t)=h_R(t)$ for some standard graded $\kk$-algebra $R$, then $f(t)$ is of type CI. 
\end{prop}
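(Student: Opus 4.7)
The plan rests on property~(6) of cyclotomic polynomials: since $f(t)$ is a Kronecker polynomial, it factors as $f(t)=\prod_{m\ge 1}\Phi_m(t)^{a_m}$. Because $\Phi_1(1)=0$ while $f(1)>0$, one has $a_1=0$, so $f(t)=\prod_{m\ge 2}\Phi_m(t)^{a_m}$. Property~(3) then yields the arithmetic constraint $f(1)=\prod_{p^c} p^{a_{p^c}}$, where the product is over prime powers, so the exponents $a_{p^c}$ are tightly controlled by the factorization of $f(1)$, while the remaining $a_m$ (for $m$ not a prime power) are unconstrained by this formula. The idea is to combine this with the degree bound $\deg f\le f(1)-1$ and enumerate the finitely many possibilities, using $\deg \Phi_{p^c}=p^{c-1}(p-1)$ and the fact that $\varphi(m)\ge 2$ for every non-prime-power $m\ge 2$.

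\textbf{Part (1).} With $f(1)=4=2^2$, only powers of $2$ may appear as prime powers, and $\sum_{c\ge 1}a_{2^c}=2$. The bound $\deg f\le 3$ then rules out any non-prime-power factor (each would add at least $2$ to the degree) and reduces the problem to the search on $(c_1,c_2)$ with $2^{c_1-1}+2^{c_2-1}\le 3$. The surviving possibilities are $(1+t)^2=\Psi_2(t)^2$ and $(1+t)(1+t^2)=\Psi_4(t)$, both manifestly of type CI.

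\textbf{Part (2).} With $f(1)=6=2\cdot 3$, there must be exactly one $\Phi_{2^a}$ factor and exactly one $\Phi_{3^b}$ factor and no other prime-power cyclotomics. Since $\deg \Phi_{3^b}=2\cdot 3^{b-1}$, the bound $\deg f\le 5$ forces $b=1$ and $a\in\{1,2\}$. For non-prime-power factors, only $\Phi_6$ (degree $2$) is small enough to fit, and only in the case $(a,b)=(1,1)$; all other non-prime-power cyclotomics have $\varphi(m)\ge 4$ and push the degree past $5$. The candidates are therefore
\[\Phi_2\Phi_3,\qquad \Phi_2\Phi_3\Phi_6,\qquad \Phi_3\Phi_4.\]
The first equals $\Psi_2\Psi_3$, and the second equals $\Psi_6$ via the identity $\Psi_6=\Phi_2\Phi_3\Phi_6$ from property~(5); both are of type CI.

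The main obstacle is the remaining candidate $\Phi_3(t)\Phi_4(t)=1+t+2t^2+t^3+t^4$, which is visibly not of type CI: any product $\prod_i\Psi_{m_i}(t)$ with $\prod_i m_i=6$ is either $\Psi_6$ (degree $5$) or $\Psi_2\Psi_3$ (degree $3$), neither matching degree $4$. This is exactly where the extra hypothesis $f=h_R$ enters. Since $h_0=h_1=1$ and $h_2=2>1$, the polynomial fits the hypothesis of Example~\ref{ex:111} (with $a=2$), and therefore Macaulay's theorem rules out its realizability as the $h$-polynomial of any standard graded $\kk$-algebra, contradicting the assumption. Hence only the type-CI candidates survive, and the proposition follows.
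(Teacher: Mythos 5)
Your proposal is correct and follows essentially the same route as the paper: enumerate all products of cyclotomic polynomials satisfying the constraints $f(1)\in\{4,6\}$ and $\deg f\le f(1)-1$, observe that all candidates except $\Phi_3\Phi_4$ are of type CI, and rule out $\Phi_3\Phi_4=1+t+2t^2+t^3+t^4$ via the Macaulay-theorem argument of Example~\ref{ex:111}. The only difference is that you make the enumeration more systematic by explicitly invoking \eqref{eq:prime} to constrain the prime-power exponents, whereas the paper states the resulting lists directly; the substance is the same.
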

\begin{proof}
(1) Consider all possible polynomials $f(t)$ with $f(1)=4$ and $\deg(f(t)) \leq 3$ constructed as a product of cyclotomic polynomials. 
(See Example~\ref{ex:small} for cyclotomic polynomials of small degrees.) 
Then such polynomials are $\Phi_2(t)^2=(1+t)^2$ and $\Phi_2(t)\Phi_4(t)=1+t+t^2+t^3$.

(2) Similarly, all possible polynomials $f(t)$ with $f(1)=6$ and $\deg (f(t)) \leq 5$ are 
\[\Phi_2(t)\Phi_3(t)=1+2t+2t^2+t^3, \; \Phi_2(t)\Phi_3(t)\Phi_6(t)=\Psi_6(t), \text{ and }\Phi_3(t)\Phi_4(t)=1+t+2t^2+t^3+t^4.\] 
The polynomials $\Phi_2(t)\Phi_3(t)$ and $\Psi_6(t)$ are of type CI, 
while we know that $1+t+2t^2+t^3+t^4$ cannot be the $h$-polynomial of any standard graded $\kk$-algebra (see Example~\ref{ex:111}). 
\end{proof}

\bigskip

\section{Proof of Theorem~\ref{thm:main} (2)}\label{sec:(2)}
The goal of this section is to give a proof of Theorem~\ref{thm:main} (2). 
Specifically, for any non-prime $n$ with $n \geq 8$, we construct a cyclotomic standard graded $\kk$-algebra $R$ such that $h_R(t)$ is not of type CI and $h_R(1)=n$. 
Such $\kk$-algebras are realized as Stanley-Reisner rings of certain simplicial complexes. 

We recall methods for constructing a simplicial complex from two simplicial complexes. 
\begin{itemize}
\item Consider two simplicial complexes $\Delta_1$ and $\Delta_2$ on $V_1$ and $V_2$, respectively, with $V_1 \cap V_2 \neq \varnothing$. 
Then $\Delta = \Delta_1 \cup \Delta_2$ and $\Delta'=\Delta_1 \cap \Delta_2$ are simplicial complexes on $V_1 \cup V_2$ and $V_1 \cap V_2$, respectively. 
Thus, by the inclusion-exclusion principle, we see that 
\begin{align}\label{eq:ie}\Hilb(\kk[\Delta],t)=\Hilb(\kk[\Delta_1],t)+\Hilb(\kk[\Delta_2],t)-\Hilb(\kk[\Delta'],t).\end{align}
\item Consider two simplicial complexes $\Delta_1$ and $\Delta_2$ on $V_1$ and $V_2$, respectively, and assume that $V_1 \cap V_2 =  \varnothing$. 
Then the \textit{join} of $\Delta_1$ and $\Delta_2$, denoted by $\Delta_1 * \Delta_2$, is the simplicial complex on $V_1 \cup V_2$ defined by 
\[\{F_1 \cup F_2 : F_1 \in \Delta_1, F_2 \in \Delta_2\}.\]
Note that $\dim (\Delta_1 * \cdots * \Delta_\ell) = \dim \Delta_1 + \cdots + \dim \Delta_\ell + \ell-1$. 
It is well known that
\[h_{\Delta_1*\Delta_2}(t)=h_{\Delta_1}(t) \cdot h_{\Delta_2}(t). \]
\end{itemize}

\bigskip

The following examples correspond to the cases $n=8,9,12$, respectively. 
\begin{ex}\label{ex_8912}
(1) (The case $h_R(1)=8$) Let $R$ be the Stanley-Reisner ring of the following simplicial complex $\Delta$ on $V \cup \{v'\}$, where $|V|=8$ and $v' \notin V$. 
We construct it as the union of $\partial\Delta_{|V|}$ (of dimension $6$) and 
$\partial\Delta_2 * \partial\Delta_3 * \underbrace{\Delta_1 * \{v'\}}_{\cong \Delta_2}$ (of dimension $4$). 
Then their intersection is $\partial\Delta_2 * \partial\Delta_3 * \Delta_1$ (of dimension $3$). 
Note that $\partial\Delta_2 * \partial\Delta_3 * \Delta_1$ can be found inside of $\partial\Delta_V = 2^V \setminus V$. 
Hence, it follows from \eqref{eq:ie} that \begin{align*}
\Hilb(\kk[\Delta],t)&=\frac{h_{\partial	\Delta_8}(t)}{(1-t)^7}+\frac{h_{\partial\Delta_2 * \partial\Delta_3 * \Delta_2}(t)}{(1-t)^5}-\frac{h_{\partial\Delta_2 * \partial\Delta_3 * \Delta_1}(t)}{(1-t)^4} \\
&=\frac{\Psi_8(t)}{(1-t)^7} + \frac{\Psi_2(t)\Psi_3(t)}{(1-t)^5} - \frac{\Psi_2(t)\Psi_3(t)}{(1-t)^4} \\
&=\frac{1+t+\cdots+t^7+(1-(1-t))(1-t)^2(1+t)(1+t+t^2)}{(1-t)^7} \\
&=\frac{1+2t+t^2+t^5+2t^6+t^7}{(1-t)^7} =\frac{\Phi_2(t)^3\Phi_{10}(t)}{(1-t)^7}. 
\end{align*}
Explicitly,
\[\kk[\Delta] \cong \kk[x_1,\ldots,x_9]/(x_1x_2x_3x_4x_5x_6x_7x_8,x_1x_2x_9,x_3x_4x_5x_9,x_7x_9,x_8x_9).\]

\noindent
(2) (The case $h_R(1)=9$) Let $R$ be the Stanley-Reisner ring of $\Delta$ which can be constructed as the union of $\partial \Delta_9$ (of dimension $7$) and 
$\partial \Delta_2*\partial \Delta_2* \partial \Delta_3 * \Delta_1 * \{v'\}$ (of dimension $5$). 
Then their intersection is $\partial \Delta_2*\partial \Delta_2* \partial \Delta_3 * \Delta_1$ (of dimension $4$). Thus, 
\begin{align*}
h_R(t)&=1+t+\cdots+t^8 + (1-t)^2(1+t)^2(1+t+t^2) - (1-t)^3(1+t)^2(1+t+t^2) \\
&=1+2t+2t^2-t^4+2t^6+2t^7+t^8 \\
&=\Phi_3(t)^2\Phi_{12}(t). 
\end{align*}
Explicitly, 
\begin{align*}
\kk[\Delta] \cong \kk[x_1,\ldots,x_{10}]/(x_1x_2x_3x_4x_5x_6x_7x_8x_9,x_1x_2x_{10},x_3x_4x_{10},x_5x_6x_7x_{10},x_9x_{10}). 
\end{align*}

\noindent
(3) (The case $h_R(1)=12$) Let $R$ be the Stanley-Reisner ring of $\Delta$ which can be constructed as the union of 
$\partial \Delta_{12}$ (of dimension $10$) and $\partial \Delta_2 * \partial \Delta_2 * \partial \Delta_3 * \partial \Delta_4 * \Delta_1 * \{v'\}$ (of dimension $8$). 
Then their intersection is $\partial \Delta_2 * \partial \Delta_2 * \partial \Delta_3 * \partial \Delta_4 * \Delta_1$ (of dimension $7$). 
Thus, \begin{align*}
h_R(t)&=1+t+\cdots+t^{11} + (1-t)^2\Psi_2(t)^2\Psi_3(t)\Psi_4(t) - (1-t)^3\Psi_2(t)^2\Psi_3(t)\Psi_4(t) \\
&=1+2t+3t^2+2t^3-2t^5-2t^6+2t^8+3t^9+2t^{10}+t^{11} \\
&=\Psi_3(t)\Psi_4(t)\Phi_{18}(t). 
\end{align*}
Explicitly,
\begin{align*}
\kk[\Delta] \cong 
\kk[x_1,\ldots,x_{13}]/(x_1x_2x_3x_4x_5x_6x_7x_8x_9x_{10}x_{11}x_{12},x_1x_2x_{13},x_3x_4x_{13}, x_5x_6x_7x_{13},x_8x_9x_{10}x_{11}x_{13}).
\end{align*}
\end{ex}

We prove the following theorem, which implies Theorem~\ref{thm:main} (2). 
\begin{thm}\label{thm:p=3}
{\em (1)} Let $q \ge 5$ be an odd integer.
Then there exists a cyclotomic standard graded $\kk$-algebra $R$ such that $h_R(t)$ is not of type CI with $h_R(1)=2q$. 

\noindent
{\em (2)} Let $q \ge 3$ be an odd integer.
Then there exists a cyclotomic standard graded $\kk$-algebra $R$ such that $h_R(t)$ is not of type CI with $h_R(1)=3q$. 

\noindent
{\em (3)} Let $5 \le p \le q$ be odd integers.
Then there exists a cyclotomic standard graded $\kk$-algebra $R$ such that $h_R(t)$ is not of type CI with $h_R(1)=pq$. 
\end{thm}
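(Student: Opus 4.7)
The plan is to generalize the three constructions in Example~\ref{ex_8912}. Fix a vertex set $V$ with $|V|=n$, a distinguished $v\in V$, and a new vertex $v'\notin V$. For integers $m_1,\ldots,m_k\ge 2$ with $\sum_i m_i\le n-1$, place $\Delta':=\partial\Delta_{m_1}*\cdots*\partial\Delta_{m_k}$ on a subset of $V\setminus\{v\}$ and set
\[\Delta:=\partial\Delta_V\cup\bigl(\Delta'*\Delta_1*\{v'\}\bigr),\qquad\Delta_1:=\{v\}.\]
Since $v'\notin V$, the intersection of the two pieces is exactly $\Delta'*\Delta_1\subseteq\partial\Delta_V$. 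Inclusion-exclusion~\eqref{eq:ie}, together with the Hilbert series of boundary simplices and the multiplicativity of $h$-polynomials under join, yields the master formula
\[h_R(t)=\Psi_n(t)+t\,(1-t)^{\alpha}\prod_{i=1}^{k}\Psi_{m_i}(t),\qquad \alpha=n-\sum_i m_i+k-3,\]
with $h_R(1)=n$ whenever $\alpha\ge 1$. This is the uniform recipe.

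The substance of the proof is to choose $(m_1,\ldots,m_k)$ so that the right-hand side factors as a product of cyclotomic polynomials containing a ``large'' factor $\Phi_E$ with $E>\deg h_R+1$. Since $\Psi_n=\prod_{d\mid n,\,d>1}\Phi_d$ splits along the divisors of $n$, in each of~(1)--(3) one can peel off a common cyclotomic factor (for instance $\Phi_2\Phi_q$ in Case~(1), or $\Phi_p\Phi_q$ in Case~(3)) and reduce the task to a target identity of the shape
\[\Phi_A(t)+t(1-t)^{\alpha}F(t)=\Phi_E(t)G(t),\]
where $F,G$ are products of small cyclotomic polynomials. The three examples illustrate the template: $n=10$ uses $\Phi_{10}+t(1-t)^2=\Phi_{12}$, $n=14$ uses $\Phi_{14}+t(1-t)^2\Phi_3=\Phi_{18}$, and $n=9$ uses $\Phi_9+t(1-t)^2\Phi_2^2=\Phi_3\Phi_{12}$. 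For general $q$ (or $(p,q)$), one would select the multiset $(m_1,\ldots,m_k)$ by augmenting the natural starting guess (such as $\partial\Delta_2*\partial\Delta_q$ in Case~(1)) with auxiliary boundary simplices $\partial\Delta_d$ for small $d$, so as to force the desired identity.

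Once such a factorization $h_R=\Phi_{d_1}\cdots\Phi_{d_r}\Phi_E$ is in hand, the non-CI conclusion is formal: if $h_R=\prod_j\Psi_{m_j}$ were of type~CI, then $\Phi_E\mid\Psi_{m_j}$ for some $j$, which forces $E\mid m_j$ and hence $m_j\ge E>\deg h_R+1$. But then $\deg\Psi_{m_j}=m_j-1>\deg h_R$, contradicting $\Psi_{m_j}\mid h_R$.

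The main obstacle is establishing the required family of cyclotomic identities uniformly over all parameters: for every odd $q\ge 5$ in Case~(1), every odd $q\ge 3$ in Case~(2), and every pair of odd integers $5\le p\le q$ in Case~(3), one must exhibit explicit $(m_1,\ldots,m_k)$ and a non-prime-power index $E>\deg h_R+1$ so that the master formula simplifies cleanly. This is essentially a number-theoretic problem about cyclotomic polynomials and is where the technical heart of the proof lies.
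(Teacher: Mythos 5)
Your master formula
\[
h_R(t)=\Psi_n(t)+t\,(1-t)^{\alpha}\prod_{i=1}^{k}\Psi_{m_i}(t)
\]
is correct and is exactly the shape of the paper's constructions for parts~(1) and~(2): there the auxiliary complex is taken to be $\partial\Delta_{(q-3)/2}*\partial\Delta_{(q-1)/2}*\partial\Delta_q$ (for $n=2q$) and $\partial\Delta_2*\partial\Delta_{q-2}*\partial\Delta_{q-1}*\partial\Delta_q$ (for $n=3q$), so that $\alpha=2$ in both cases and the identities of Lemma~\ref{lem:decomp} produce $\Psi_2\Psi_q\Phi_6(t^{(q-1)/2})$ and $\Psi_3\Psi_q\Phi_6(t^{q-1})$. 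Your ``non-CI is formal'' step, extracting a cyclotomic factor $\Phi_E$ with $E>\deg h_R+1$, is also a clean and valid way to close that part of the argument. However, you explicitly defer the existence of the required cyclotomic identities as ``the technical heart of the proof,'' which means the plan does not actually establish the theorem even in cases~(1) and~(2); the content of Lemma~\ref{lem:decomp} is precisely what is missing.

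The more serious gap is in part~(3). Your template forces the inner complex to be a join of boundary simplices, so that its $h$-polynomial is a product $\prod_i\Psi_{m_i}(t)$. This is not available when both $p,q\ge 5$: the paper's key polynomial $f_{p,q}(t)=\bigl(\Psi_p\Psi_q\Phi_6(t^{(p-1)(q-1)/2})-\Psi_{pq}\bigr)/\bigl(t(1-t)^2\Psi_{q-1}\bigr)$ is \emph{not} a product of $\Psi_m$'s in general (one can check, e.g., that $f_{5,5}$ has no such factorization, since its coefficient sequence $1,3,6,10,14,\ldots$ departs from any $\Psi_{m_1}\Psi_{m_2}\Psi_{m_3}$ at degree~$4$). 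The paper instead shows $f_{p,q}$ is palindromic with nonnegative, appropriately increasing coefficients (Lemma~\ref{lem:fpq} and the computation~\eqref{eq:c_i}, \eqref{eq:c_i'}), and invokes the $g$-theorem (Theorem~\ref{thm:g}) to realize $f_{p,q}$ as the $h$-polynomial of a boundary complex of a simplicial polytope~$\Delta'$; the inner piece is then $\partial\Delta_{q-1}*\Delta'*\Delta_1*\{v'\}$. Without recognizing that the auxiliary complex must be produced by the $g$-theorem rather than by joins of boundary simplices, the approach cannot reach part~(3), and this is the principal missing idea in your proposal.
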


We postpone the proof of this theorem. So far, assume Theorem~\ref{thm:p=3} holds. Then we can prove the desired result. In fact, 
let $R$ be a cyclotomic standard graded $\kk$-algebra whose $h$-polynomial is not of type CI. 
Then, for any positive integer $a$, the standard graded $\kk$-algebra $R'=R[x]/(x^a)$ has the $h$-polynomial $h_{R'}(t)$ which is not of type CI and $h_{R'}(1)=a\cdot h_R(1)$. 
(In fact, we have $h_{R'}(t)=h_R(t)\Psi_a(t)$.) 

Let $n \geq 8$ be a non-prime integer. 
\begin{itemize}
\item If $n$ has a prime divisor at least $5$, then $n$ is a multiple of some primes $p,q$ with $pq \geq 10$. 
Hence, the existence is guaranteed by Theorems~\ref{thm:p=3} together with the above discussion. 
\item Let $n=2^i3^j \geq 8$. Then either $(i,j) \geq (3,0)$, $(i,j) \geq (2,1)$, or $(i,j) \geq (0,2)$ holds. 
Example~\ref{ex_8912} guarantees the existence of the cases $(i,j)=(3,0)$, $(2,1)$ and $(0,2)$. 
\end{itemize}
This completes the proof of Theorem~\ref{thm:main} (2). 

\bigskip

In preparation for the proofs of Theorem~\ref{thm:p=3} (1) and (2), we state the following lemma. 
\begin{lem}\label{lem:decomp} 
{\em (1)} Let $q \geq 5$ be an odd integer. Then the following holds: 
\[\Psi_2(t) \Psi_q(t) \Phi_6(t^{(q-1)/2})=\Psi_{2q}+t(1-t)^2 \Psi_{(q-3)/2}(t) \Psi_{(q-1)/2}(t) \Psi_q(t).\]

\noindent
{\em (2)} Let $q \geq 3$ be an odd integer. Then the following holds: 
\[\Psi_3(t) \Psi_q(t) \Phi_6(t^{q-1})=\Psi_{3q}+t(1-t)^2 \Psi_2(t) \Psi_{q-2}(t) \Psi_{q-1}(t) \Psi_q(t).\]
\end{lem}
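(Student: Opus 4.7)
The plan is to establish each identity as a polynomial equality by clearing the denominator hidden in each $\Psi_k(t) = (1-t^k)/(1-t)$, extracting a large common factor on both sides, and then verifying the short identity that remains by direct expansion. In both parts the key observation is that, after clearing denominators, the factor $(1-t^q)$ divides every term (it always divides $1-t^{2q}$ or $1-t^{3q}$ coming from $\Psi_{2q}$ or $\Psi_{3q}$), so that a polynomial identity of bounded degree survives on both sides.

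For part (1) I would set $n = (q-1)/2$, so $q = 2n+1$, $\Psi_{(q-1)/2} = \Psi_n$, $\Psi_{(q-3)/2} = \Psi_{n-1}$, and $\Phi_6(t^n) = 1 - t^n + t^{2n}$. Multiplying both sides by $(1-t)^2$ converts each relevant pair $\Psi_i\Psi_j$ on the left into $(1-t^i)(1-t^j)$, and turns the right-hand side into $(1-t^{2q})(1-t) + t(1-t)(1-t^{n-1})(1-t^n)(1-t^q)$. Writing $1 - t^{2q} = (1-t^q)(1+t^q)$ and $1 - t^2 = (1-t)(1+t)$, both sides share the common factor $(1-t)(1-t^q)$, and after cancellation the claim reduces to
\[
(1+t)(1 - t^n + t^{2n}) = (1+t^q) + t(1-t^{n-1})(1-t^n),
\]
where each side expands to $1 + t - t^n - t^{n+1} + t^{2n} + t^{2n+1}$. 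For part (2) the same strategy applies with $\Phi_6(t^{q-1}) = 1 - t^{q-1} + t^{2q-2}$: multiplying both sides by $(1-t)^4$, writing $1-t^3 = (1-t)(1+t+t^2)$ and $1 - t^{3q} = (1-t^q)(1+t^q+t^{2q})$, and cancelling the common factor $(1-t)^3(1-t^q)$ reduces the claim to
\[
(1+t+t^2)(1 - t^{q-1} + t^{2q-2}) = (1+t^q+t^{2q}) + t(1+t)(1-t^{q-2})(1-t^{q-1}),
\]
whose two sides each expand to $1 + t + t^2 - t^{q-1} - t^q - t^{q+1} + t^{2q-2} + t^{2q-1} + t^{2q}$.

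No part of this argument is conceptually difficult; the main task is purely bookkeeping. The hypotheses $q \ge 5$ in (1) and $q \ge 3$ in (2) are used only to guarantee that $\Psi_{(q-3)/2}$ and $\Psi_{q-2}$ have positive indices and are genuine polynomials. The only place an error could creep in is the final polynomial expansion, but since each reduced identity has only a handful of terms on either side, the match is transparent once the leading block $\{1,t\}$ (respectively $\{1,t,t^2\}$), the middle block of negative monomials, and the trailing block are lined up.
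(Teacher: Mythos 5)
Your proposal is correct and follows essentially the same route as the paper: both clear denominators by multiplying through by a power of $(1-t)$ and then verify the resulting polynomial identity directly. Your extra step of factoring out $(1-t)(1-t^q)$ (resp.\ $(1-t)^3(1-t^q)$) is a harmless refinement that reduces the final check to a shorter identity, but the underlying computation is the one the paper carries out when it says ``we can directly check.''
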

\begin{proof}
(1) We can directly check the following: 
\begin{align*}
(1-t)^2(\Psi_2(t) \Psi_q(t) \Phi_6(t^{(q-1)/2})-\Psi_{2q}(t))&=(1-t^2)(1-t^q)(1-t^{\frac{q-1}{2}}+t^{q-1})-(1-t)(1-t^{2q}) \\
&=t(1-t)^4\Psi_{(q-3)/2}(t)\Psi_{(q-1)/2}(t)\Psi_q(t). 
\end{align*}
This yields the required equation. 

(2) We can directly check the following: 
\begin{align*}
(1-t)^2(\Psi_3(t)\Psi_q(t)\Phi_6(t^{q-1})-\Psi_{3q}(t)) &=(1-t^3)(1-t^q)(1-t^{q-1}+t^{2q-2})-(1-t)(1-t^{3q}) \\
&=t(1-t)^4\Psi_2(t)\Psi_{q-2}(t)\Psi_{q-1}(t)\Psi_q(t).  
\end{align*}
This yields the required equation. 
\end{proof}

\begin{proof}[Proofs of Theorem~\ref{thm:p=3} (1) and (2)]
(1) Let $\Delta$ be the simplicial complex constructed as the union of $\partial \Delta_{2q}$ (of dimension $2q-2$) and 
$\partial \Delta_{(q-3)/2} * \partial \Delta_{(q-1)/2} * \partial\Delta_q*\Delta_1 * \{v'\}$ (of dimension $2q-4$), 
whose intersection is $\partial \Delta_{(q-3)/2} * \partial \Delta_{(q-1)/2} * \partial\Delta_q*\Delta_1$ (of dimension $2q-5$). 
(We see that this can be found inside of $\partial \Delta_{2q}$ by considering the number of vertices.) It then follows from \eqref{eq:ie} that  
\begin{align*}
\Hilb(\kk[\Delta],t)&=\frac{h_{\partial \Delta_{2q}}(t)}{(1-t)^{2q-1}}+\frac{h_{\partial \Delta_{(q-3)/2} * \partial \Delta_{(q-1)/2} * \partial\Delta_q}(t)}{(1-t)^{2q-3}} 
-\frac{h_{\partial \Delta_{(q-3)/2} * \partial \Delta_{(q-1)/2} * \partial\Delta_q}(t)}{(1-t)^{2q-4}} \\
&=\frac{\Psi_{2q}(t)+((1-t)^2-(1-t)^3)\Psi_{(q-3)/2}(t)\Psi_{(q-1)/2}(t)\Psi_q(t)}{(1-t)^{2q-1}} \\
&=\frac{\Psi_2(t)\Psi_q(t)\Phi_6(t^{(q-1)/2})}{(1-t)^{2q-1}} \quad(\text{by Lemma~\ref{lem:decomp} (1)}). 
\end{align*}
It can be seen that $\kk[\Delta]$ is cyclotomic, but  $h_{\kk[\Delta]}(t)=\Psi_2(t)\Psi_q(t)\Phi_6(t^{(q-1)/2})$ is not of type CI. 

(2) Let $\Delta$ be the simplicial complex constructed as the union of $\partial \Delta_{3q}$ (of dimension $3q-2$) and 
$\partial \Delta_2 * \partial \Delta_{q-2} * \partial \Delta_{q-1} * \partial\Delta_q * \Delta_1 * \{v'\}$ (of dimension $3q-4$), 
whose intersection is $\partial \Delta_2 * \partial \Delta_{q-2} * \partial \Delta_{q-1} * \partial\Delta_q * \Delta_1$ (of dimension $3q-5$). Then 
\begin{align*}
\Hilb(\kk[\Delta],t)&=\frac{h_{\partial \Delta_{3q}}(t)}{(1-t)^{3q-1}}+\frac{h_{\partial \Delta_2 * \partial \Delta_{q-2} * \partial \Delta_{q-1} * \partial\Delta_q * \Delta_2}(t)}{(1-t)^{3q-3}} 
-\frac{h_{\partial \Delta_2 * \partial \Delta_{q-2} * \partial \Delta_{q-1} * \partial\Delta_q * \Delta_1}(t)}{(1-t)^{3q-4}} \\
&=\frac{\Psi_{3q}(t)+((1-t)^2-(1-t)^3)\Psi_2(t)\Psi_{q-2}(t)\Psi_{q-1}(t)\Psi_q(t)}{(1-t)^{3q-1}} \\
&=\frac{\Psi_3(t)\Psi_q(t)\Phi_6(t^{q-1})}{(1-t)^{2q-1}} \quad(\text{by Lemma~\ref{lem:decomp} (2)}). 
\end{align*}
Hence we get the result.
\end{proof}

For the proof of Theorem~\ref{thm:p=3} (3), we introduce several notations and equalities. 
Given odd integers $p,q$ with $3 \leq p \leq q$, let 
\[g_{p,q}(t)=\Psi_p(t)\Psi_q(t)\Phi_6(t^{(p-1)(q-1)/2})-\Psi_{pq}(t) \;\text{ and }\; f_{p,q}(t)=\frac{g_{p,q}(t)}{t(1-t)^2\Psi_{q-1}(t)}.\]
Note that $f_{3,q}(t)=\Psi_2(t)\Psi_{q-2}(t)\Psi_q(t)$ by Lemma~\ref{lem:decomp} (2). 
\begin{lem}\label{lem:fpq}
For any odd integers $p,q$ with $3 \leq p \leq q$, $f_{p,q}(t)$ is a palindromic polynomial in $t$ of degree $(p-1)q-3$. 
\end{lem}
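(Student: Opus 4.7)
The plan is to verify that $t(1-t)^2\Psi_{q-1}(t)$ divides $g_{p,q}(t)$ factor by factor, pin down the degree by a counting argument, and finally derive palindromicity from the manifest symmetry of the building blocks.

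First, I would observe that $\Psi_p(t)$, $\Psi_q(t)$, and $\Phi_6(t^{(p-1)(q-1)/2})$ are palindromic of degrees $p-1$, $q-1$, and $(p-1)(q-1)$ respectively, so their product is palindromic of degree $pq-1$; since $\Psi_{pq}(t)$ is also palindromic of degree $pq-1$ with the same leading coefficient $1$, we have
\[
t^{pq-1}g_{p,q}(1/t)=g_{p,q}(t),
\]
and both the leading coefficient of $t^{pq-1}$ and the constant term of $g_{p,q}$ vanish. In particular $t\mid g_{p,q}(t)$. Next, $g_{p,q}(1)=p\cdot q\cdot \Phi_6(1)-pq=0$ since $\Phi_6(1)=1$. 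The standard lemma that a palindromic polynomial vanishing at $t=1$ automatically has $(1-t)^2$ as a factor (proved by differentiating the palindromic relation) then gives $t(1-t)^2\mid g_{p,q}(t)$.

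The key divisibility step, and what I expect to be the main obstacle, is showing $\Psi_{q-1}(t)\mid g_{p,q}(t)$. For this, let $\zeta$ be any $(q-1)$-th root of unity with $\zeta\neq 1$, so that $\zeta^{q-1}=1$. The oddness of $p$ makes $(p-1)/2$ an integer, giving the three simultaneous simplifications
\[
\zeta^{q}=\zeta,\qquad \zeta^{(p-1)(q-1)/2}=1,\qquad \zeta^{pq}=\zeta^{p},
\]
from which $\Psi_q(\zeta)=(1-\zeta)/(1-\zeta)=1$, $\Phi_6(\zeta^{(p-1)(q-1)/2})=\Phi_6(1)=1$, and $\Psi_{pq}(\zeta)=\Psi_p(\zeta)$. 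Therefore $g_{p,q}(\zeta)=\Psi_p(\zeta)-\Psi_p(\zeta)=0$ for every root of $\Psi_{q-1}(t)$, so $\Psi_{q-1}(t)\mid g_{p,q}(t)$. Since $t$, $(1-t)^2$, and $\Psi_{q-1}(t)$ are pairwise coprime, their product divides $g_{p,q}(t)$, confirming that $f_{p,q}(t)$ is a polynomial.

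For the degree, $\deg g_{p,q}(t)\le pq-2$ and $\deg\bigl(t(1-t)^2\Psi_{q-1}(t)\bigr)=q+1$ yield $\deg f_{p,q}(t)\le (p-1)q-3$. To obtain equality, I would compute $g_{p,q}(t)=t+O(t^2)$ from $\Psi_p\Psi_q\Phi_6(t^{(p-1)(q-1)/2})=1+2t+O(t^2)$ and $\Psi_{pq}(t)=1+t+O(t^2)$ (here one uses $(p-1)(q-1)/2\ge 2$), which forces $f_{p,q}(0)=1$. Finally, for palindromicity, note that $Q(t):=(1-t)^2\Psi_{q-1}(t)$ is palindromic of degree $q$; the relation $t^{pq-1}g_{p,q}(1/t)=g_{p,q}(t)$ translates, after dividing by $t$, into $t^{pq-3}\tilde g(1/t)=\tilde g(t)$ where $\tilde g(t)=g_{p,q}(t)/t=Q(t)f_{p,q}(t)$. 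Substituting $\tilde g=Qf_{p,q}$ and cancelling $Q$ in the polynomial ring gives
\[
t^{(p-1)q-3}f_{p,q}(1/t)=f_{p,q}(t),
\]
so $f_{p,q}(t)$ is palindromic, and combined with $f_{p,q}(0)=1$ the degree is exactly $(p-1)q-3$.
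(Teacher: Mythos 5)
Your proof is correct and follows essentially the same strategy as the paper's: verify that each factor of $t(1-t)^2\Psi_{q-1}(t)$ divides $g_{p,q}(t)$, then determine the degree, then check the palindromic functional equation $t^{(p-1)q-3}f_{p,q}(1/t)=f_{p,q}(t)$. You additionally supply the details the paper dismisses as ``straightforward'' (evaluating $g_{p,q}$ at nontrivial $(q-1)$-th roots of unity using oddness of $p$, deriving $g_{p,q}'(1)=0$ from the palindromic relation rather than direct computation), which makes the argument cleaner but does not change its underlying route.
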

\begin{proof}
First, we verify the polynomiality of $f_{p,q}(t)$.
Namely, it suffices to show that
$g_{p,q}(t)$ (resp.\ its derivative $g_{p,q}'(t)$) has zeros at $0$, $1$, and $e^{2\pi j\sqrt{-1}/(q-1)}$ for all $j=1,\ldots,q-2$ (resp.\ at $1$),
which can be verified straightforwardly.

Next, we confirm the degree of $f_{p,q}(t)$, but it is also straightforward. 
In fact, we have $\deg(g_{p,q}(t))=pq-2$ (note that the subtraction causes cancellation of the leading terms) and $\deg(t(1-t)^2\Psi_{q-1}(t))=3+q-2=q+1$. 
Hence, $\deg(f_{p,q}(t))=pq-2-(q+1)=(p-1)q-3$. 

Finally, it can be directly checked that $f_{p,q}(t)$ satisfies the relation $t^{(p-1)q-3}f_{p,q}(t^{-1})=f_{p,q}(t)$. 
This shows the palindromicity of $f_{p,q}(t)$. 
\end{proof}

\begin{ex}\label{ex:fpq}
For small $p,q$, let us compute $f_{p,q}(t)$ explicitly: 
\begin{align*}
f_{5,5}(t)&=1+3t+6t^2+10t^3+14t^4+18t^5+22t^6+25t^7+26t^8+26t^9+\cdots+3t^{16}+t^{17}, \\
f_{5,7}(t)&=1+3t+6t^2+10t^3+14t^4+18t^5+22t^6+26t^7+30t^8+34t^9+37t^{10}+39t^{11}+40t^{12} \\
&\quad +40t^{13}+\cdots+3t^{24}+t^{25}, \\
f_{7,7}(t)&=1+3t+6t^2+10t^3+15t^4+21t^5+27t^6+33t^7+39t^8+45t^9+51t^{10}+57t^{11}+62t^{12} \\
&\quad +67t^{13}+72t^{14}+77t^{15}+82t^{16}+86t^{17}+88t^{18}+89t^{19}+89t^{20}+\cdots+3t^{38}+t^{39}. 
\end{align*} 
Then we see that 
\begin{align*}
(1-t)f_{5,7}(t)&=1+2t+3t^2+4t^3+4(t^4+\cdots+t^9)+3t^{10}+2t^{11}+t^{12}-t^{14}-\cdots-t^{26}. 
\end{align*}
Namely, the first parts of the coefficients $c_0,c_1,\ldots,c_{12}$ of $(1-t)f_{5,7}(t)$ are 
\begin{align*}
c_i=\begin{cases}
i+1 &\text{if }i=0,1,\ldots,3, \\
4 &\text{if }i \in \{4,\ldots,9\}, \\
3 &\text{if }i \in \{10\}, \\
13 - i &\text{if }i=11,12. 
\end{cases} 
\end{align*}
This description matches \eqref{eq:c_i} for $(p,q)=(5,7)$ appearing in the proof of Proposition~\ref{prop:simp_comp}. 

Moreover, we have 
\begin{align*}
(1-t)f_{7,7}(t)&=1+2t+3t^2+4t^3+5t^4+6t^5+6(t^6+\cdots+t^{11})+5(t^{12}+\cdots+t^{16}) \\
&\quad+4t^{17}+2t^{18}+t^{19}-t^{21}-\cdots-t^{40}. 
\end{align*}
Namely, the first parts of the coefficients $c_0,c_1,\ldots,c_{19}$ of $(1-t)f_{7,7}(t)$ are 
\begin{align*}
c_i=\begin{cases}
i+1 &\text{if }i=0,1,\ldots,5, \\
6 &\text{if }i \in \{6,\ldots,11\}, \\
5 &\text{if }i \in \{12,\ldots,16\}, \\
4 &\text{if }i \in \{17\}, \\
20 - i &\text{if }i=18,19, 
\end{cases} 
\end{align*}
This description matches \eqref{eq:c_i'} for $p=q=7$ appearing in the proof of Proposition~\ref{prop:simp_comp}. 
\end{ex}

\begin{rem}
In fact, one can verify that the following equality holds:
\begin{align*}
f_{p,q}(t)=&\Psi_{p-1}(t)\left(\Psi_{(p-1)(q-1)/2}(t)\right)^2 - t^{p+q-2}\Psi_{(p-3)q/2-p+1}(t)\Psi_{(p-3)q/2-p+2}(t) \\ 
&- t^{2(q-1)+p-1}\Psi_{(p-5)(q-1)/2-(p-1)}(t)\Psi_{(p-5)(q-1)/2-1}(t) \\
&- \sum_{k=3}^{(p-3)/2} t^{k(q-1)+p-1}\Psi_{((p-1)/2 -k)(q-1)-(p+2-k)}(t)\Psi_{((p-1)/2-k)(q-1)-k+2}(t) \\
&- t^{(p-1)(q-1)/2-1}(1+t)\Psi_{(p+3)/2}(t)\Psi_{(p-5)/2}(t) 
\end{align*}
for $7 \leq p \leq q$ and  
\[f_{5,q}(t)=\Psi_4(t)\left(\Psi_{2(q-1)}(t)\right)^2 - t^{q+3}\Psi_{q-4}(t)\Psi_{q-3}(t) - t^{2q-3}\Psi_{4}(t) \]
for $q \geq 5$. We omit to give a proof of this equality since we do not use this. 
\end{rem}

The following constitutes the crucial part of the proof of Theorem~\ref{thm:p=3} (3), and hence of Theorem~\ref{thm:main} (2).
\begin{prop}\label{prop:simp_comp}
Let $5 \leq p \leq q$ be odd integers. 
Then there exists a simplicial complex (in particular, a boundary complex of a simplicial polytope) 
of dimension $(p-1)q-5$ with $(p-1)q-1$ vertices whose $h$-polynomial is equal to $f_{p,q}(t)$.  
\end{prop}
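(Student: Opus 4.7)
The plan is to apply the $g$-theorem (Theorem~\ref{thm:g}) directly to $f_{p,q}(t)$. By Lemma~\ref{lem:fpq}, $f_{p,q}(t)$ is palindromic of degree $d = (p-1)q - 3$, so the symmetry condition is automatic. A short power-series expansion of the defining relation $g_{p,q}(t) = t(1-t)^2 \Psi_{q-1}(t)\, f_{p,q}(t)$ at $t = 0$, using
\[\Psi_p(t)\Psi_q(t) = 1 + 2t + 3t^2 + O(t^3), \quad \Psi_{pq}(t) = 1 + t + t^2 + O(t^3),\]
together with $\Phi_6(t^{(p-1)(q-1)/2}) = 1 + O(t^4)$, gives $g_{p,q}(t) = t + 2t^2 + O(t^3)$ and hence $h_0 = 1$, $h_1 = 3$ for the coefficients of $f_{p,q}(t)$. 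What remains is to verify the Macaulay-type inequalities $0 \le g_{i+1} \le g_i^{\langle i \rangle}$ for $1 \le i \le \lfloor d/2 \rfloor - 1$, where $g_i := h_i - h_{i-1}$ coincides with the $i$-th coefficient of $(1-t) f_{p,q}(t)$.

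The principal step is to determine the first-half coefficients of $(1-t)f_{p,q}(t) = g_{p,q}(t)/[t(1-t)\Psi_{q-1}(t)]$ explicitly. Guided by Example~\ref{ex:fpq}, the $g$-vector in the first half should follow the profile: an ascending segment $g_i = i + 1$ for $1 \le i \le p - 2$, a first plateau $g_i = p - 1$ of length at least $q - p + 1$, and then a sequence of lower plateaus and single-step decreases terminating at $g_{(d+1)/2} = 0$. I would establish this either by expanding $\Psi_p(t)\Psi_q(t)\Phi_6(t^{(p-1)(q-1)/2}) - \Psi_{pq}(t)$ and dividing term by term by $t(1-t)\Psi_{q-1}(t)$, or, more cleanly, by first verifying the explicit decomposition of $f_{p,q}(t)$ recorded (without proof) in the Remark preceding the proposition, from which the coefficient pattern is read off directly.

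Once the $g$-vector structure is in hand, the Macaulay inequalities follow from two elementary binomial facts:
\begin{itemize}
\item[(i)] If $g_i = i + 1 = \binom{i+1}{i}$, then $g_i^{\langle i \rangle} = \binom{i+2}{i+1} = i + 2 = g_{i+1}$; this gives equality throughout the ascending range $1 \le i \le p - 2$.
\item[(ii)] If $1 \le g_i \le i$, the binomial sum expansion reads $g_i = \binom{i}{i} + \binom{i-1}{i-1} + \cdots + \binom{i - g_i + 1}{i - g_i + 1}$, and hence $g_i^{\langle i \rangle} = g_i$. This applies from $i = p - 1$ onward, where $g_i \le p - 1 \le i$; since the $g$-vector is non-increasing on that range, the bound $g_{i+1} \le g_i = g_i^{\langle i \rangle}$ holds automatically.
\end{itemize}
Non-negativity of the $g_i$ in the first half is part of the same structural description. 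The $g$-theorem then produces a simplicial polytope whose boundary complex has $h$-polynomial $f_{p,q}(t)$, and the claimed dimension and vertex count follow from $\deg f_{p,q}$ and $h_1 = 3$.

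The main obstacle is the rigorous combinatorial verification of the plateau-and-drop profile of the first-half $g$-vector for arbitrary $5 \le p \le q$. Although the pattern is transparent in the small examples, a self-contained proof requires careful polynomial bookkeeping of the cancellations inside $\Psi_p\Psi_q\Phi_6(t^{(p-1)(q-1)/2}) - \Psi_{pq}$, and the cleanest route proceeds through the Remark identity, which itself must be justified since the paper only states it.
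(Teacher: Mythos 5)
Your proposal follows essentially the same route as the paper's proof: apply the $g$-theorem to $f_{p,q}(t)$, invoke Lemma~\ref{lem:fpq} for palindromicity, extract the first-half coefficients $c_i$ of $(1-t)f_{p,q}(t)$, and verify the Macaulay-type inequalities via the two binomial observations that $c_i^{\langle i\rangle}=i+2$ when $c_i=i+1$ and $c_i^{\langle i\rangle}=c_i$ when $1\le c_i\le i$, combined with the non-increasing plateau-and-drop profile after $i=p-1$. The paper, like you, leaves the explicit coefficient formula to ``a steady computation'' (recording it as \eqref{eq:c_i}/\eqref{eq:c_i'} rather than deriving it), so your outline is a faithful match; the only difference is that you also suggest the unproved Remark identity as an alternative way to read off the profile, which the paper's proof does not use.
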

\begin{proof}
Let $f_{p,q}(t)=\sum_{i = 0}^sh_it^i$, where $s=p(q-1)-3$, and let $(1-t)f_{p,q}(t)=\sum_{i \geq 0}c_it^i$. Then $c_i=h_i-h_{i-1}$ for $i=1,\ldots,(p-1)q-3$. 
Note that $h_1=3$. This implies that the number of vertices $f_0(\Delta)$ of a desired simplicial polytope (a simplicial complex) is $(p-1)q-5+1+h_1=(p-1)q-1$ (see \eqref{eq:fh}). 

By a steady computation of the coefficients of the polynomial $(1-t)f_{p,q}(t)$, which coincides with $\displaystyle\frac{g_{p,q}(t)}{t(1-t)\Psi_{q-1}(t)}$, 
we see that the first half of the coefficients of $(1-t)f_{p,q}(t)$, i.e., $c_0=1,c_1,\ldots,c_{(p-1)q/2-2}$ are of the following form: if $5 \leq p < q$, then 
\begin{align}\label{eq:c_i}
c_i=\begin{cases}
i+1 &\text{if }i=0,1,\ldots,p-2, \\
p-1-k &\text{if }i \in I_k \text{ for }k=0,1,\ldots,(p-3)/2, \\
(p-1)q/2-1 - i &\text{if }i=(p-1)(q-1)/2-1,\ldots,(p-1)q/2 -2, 
\end{cases} 
\end{align}
where 
\begin{align*}
I_k&=\{k(q-1)+p-1,\ldots,k(q-1)+p-1+(q-2)\} \text{ for }k=0,1,\ldots,(p-5)/2, \text{ and } \\
I_{(p-3)/2}&=\{(p-3)(q-1)/2 + p-1, \ldots,\underbrace{(p-3)(q-1)/2+p-1+q-p-2}_{=(p-1)(q-1)/2 -2}\}; 
\end{align*}
and if $5 \leq p=q$, then 
\begin{align}\label{eq:c_i'}
c_i=\begin{cases}
i+1 &\text{if }i=0,1,\ldots,p-2, \\
p-1-k &\text{if }i \in I_k \text{ for }k=0,1,\ldots,(p-3)/2, \\
(p-1)q/2-1 - i &\text{if }i=(p-1)(q-1)/2,\ldots,(p-1)q/2 -2, 
\end{cases} 
\end{align}
where 
\begin{align*}
I_k&=\{k(q-1)+p-1,\ldots,k(q-1)+p-1+q-2\} \text{ for }k=0,1,\ldots,(p-7)/2, \\
I_{(p-5)/2}&=\{(p-5)(q-1)/2 + p-1,\ldots,\underbrace{(p-5)(q-1)/2 +p-1+q-3}_{=(p-1)(q-1)/2-2}\}, \text{ and } \\
I_{(p-3)/2}&=\{(p-1)(q-1)/2 -1\}. 
\end{align*}
See Example~\ref{ex:fpq} for illustrations. 

Note that $f_{p,q}(t)$ is a palindromic polynomial whose constant term is $1$ (see Lemma~\ref{lem:fpq}).
Hence, according to Theorem~\ref{thm:g}, in order for there to exist a simplicial polytope whose $h$-polynomial is $f_{p,q}(t)$, it suffices to verify the following inequalities:
\[0 \leq c_{i+1} \leq c_i^{\langle i \rangle} \;\text{ for each }\;i=1,\ldots,(p-1)q/2-2.\]

The first inequality $0 \leq c_{i+1}$ holds for each $i$. Regarding the second one, we can get a binomial sum expression as follows: 
\begin{align*}
c_i=\begin{cases}
\binom{i+1}{i} &\text{if }i=0,1,\ldots,p-2, \\
\binom{i}{i}+\binom{i-1}{i-1}+\cdots+\binom{i-c_i+1}{i-c_i+1} &\text{if }i=p-1,\ldots,(p-1)q/2-2, 
\end{cases}
\end{align*}
where we notice that $c_i \leq i$ if $i \geq p-1$. Hence, 
\begin{align*}
c_i^{\langle i \rangle}=\begin{cases}
\binom{i+2}{i+1}=i+2=c_{i+1} &\text{if }i=0,1,\ldots,p-3, \\
\binom{p}{p-1}=p=c_{p-1}+1 &\text{if }i=p-2, \\
\binom{i+1}{i+1}+\binom{i}{i}+\cdots+\binom{i-c_i+2}{i-c_i+2}=c_i &\text{if }i=p-1,\ldots,(p-1)q/2-2. 
\end{cases}
\end{align*}
Since $c_i \geq c_{i+1}$ holds for each $i$ with $p-2 \leq i \leq (p-1)q/2-2$ (see \eqref{eq:c_i} and \eqref{eq:c_i'}), we conclude the desired condition. 
\end{proof}

Now, we are ready to give a proof of Theorem~\ref{thm:p=3} (3). 
\begin{proof}[Proof of Theorem~\ref{thm:p=3} (3)]
By definition of $f_{p,q}(t)$, we see the following equality: 
\begin{align*}
\frac{\Psi_p(t)\Psi_q(t)\Phi_6(t^{(p-1)(q-1)/2})}{(1-t)^{pq-1}}=\frac{\Psi_{pq}(t)}{(1-t)^{pq-1}}+\frac{\Psi_{q-1}(t)f_{p,q}(t)}{(1-t)^{pq-3}}-\frac{\Psi_{q-1}(t)f_{p,q}(t)}{(1-t)^{pq-4}}. 
\end{align*}
Since $h_{\partial \Delta_{pq}}(t)=\Psi_{pq}(t)$ and $\partial \Delta_{pq}$ has dimension $pq-2$, 
by the same argument as before, it is enough to show the existence of a subcomplex of $\partial \Delta_{pq}$ of dimension $pq-5$ whose $h$-polynomial is equal to $\Psi_{q-1}(t)f_{p,q}(t)$. 
We can construct such a subcomplex as $\partial \Delta_{q-1} * \Delta' * \Delta_1$ inside $\partial \Delta_{pq}$, 
where $\Delta'$ is a simplicial complex guaranteed by Proposition~\ref{prop:simp_comp}, because 
the number of vertices is equal to 
\[\underbrace{q-1}_{=\partial \Delta_{q-1}}+\underbrace{(p-1)q-1}_{=\Delta'}+1=pq-1,\]
which is less than the number of vertices of $\partial \Delta_{pq}$.
Moreover, the dimension is equal to $q-2+(p-1)q-5+0+2=pq-5$. 

Therefore, we complete the proof. 
\end{proof}

What we have shown in this section is that, if $p=2$ and $q \geq 5$ is odd, or if $p,q \geq 3$ are both odd,  
then there exists a standard graded $\kk$-algebra (indeed, a simplicial complex) whose $h$-polynomial is equal to $\Psi_p(t)\Psi_q(t)\Phi_6(t^{(p-1)(q-1)/2})$.   
It is natural to consider a ``more direct'' application of Macaulay's theorem to verify the existence of a standard graded $\kk$-algebra having this $h$-polynomial.  
However, such an approach seems to be highly nontrivial. 
Indeed, one must translate the coefficients of the polynomial $\Psi_p(t)\Psi_q(t)\Phi_6(t^{(p-1)(q-1)/2})$ into the language of Hilbert functions via \eqref{eq:hilb}, 
and the resulting expressions quickly become intricate.  
For instance, let $p=2$ and $q \geq 5$ be odd. Then  
\begin{align*}
\Psi_2(t)\Psi_q(t)\Phi_6(t^{(q-1)/2})&=1+2(t+\cdots+t^{(q-3)/2})+t^{(q-1)/2}+t^{q-1}+t^q+t^{(3q-1)/2}\\
&\quad +2(t^{(3q+1)/2}+\cdots+t^{2q-2})+t^{2q-1}.
\end{align*}
Let $d$ be a sufficiently large integer, and define 
\[f(n)=\sum_{i \geq 0}h_i\binom{d-1+n-i}{n-i}\]
with \eqref{eq:hilb} in mind, 
where $h_i$ denotes the coefficient of $t^i$ in $\Psi_2(t)\Psi_q(t)\Phi_6(t^{(q-1)/2})$.
To apply Macaulay's theorem, we need to verify that
\begin{align}\label{eq.Mac}
f(n+1) \leq f(n)^{\langle n \rangle} \quad \text{for all $n \geq 1$.}
\end{align}
Although these inequalities are guaranteed by our result (the proof of Theorem~\ref{thm:p=3}), we now attempt a direct verification. For example, we have 
\begin{align*}
f(q-2)&=\binom{d-1+q-2}{q-2}+2\left(\binom{d-1+q-3}{q-3}+\cdots+\binom{d-1+(q-5)/2}{(q-5)/2}\right)\\
&\quad+\binom{d-1+(q-3)/2}{(q-3)/2},\\
f(q-1)&=\binom{d-1+q-1}{q-1} +2\left(\binom{d-1+q-2}{q-2}+\cdots+\binom{d-1+(q-3)/2}{(q-3)/2}\right)\\
&\quad+\binom{d-1+(q-1)/2}{(q-1)/2}+\binom{d-1}{0},
\end{align*}
and one needs to estimate $f(q-2)^{\langle q-2 \rangle}-f(q-1)$,
which requires explicit and complicated computations of the binomial sum expression $f(q-2)^{\langle q-2 \rangle}$.
Consequently, a direct verification of \eqref{eq.Mac} seems quite difficult.
Our approach, which consists of finding a tractable polynomial and applying the $g$-theorem, is technical but significant, as it allows us to avoid eraborate computations of binomial sum expressions.

\end{document}